\newtheorem*{rep@theorem}{\rep@title}
\newcommand{\newreptheorem}[2]{%
\newenvironment{rep#1}[1]{%
 \def\rep@title{#2 \ref{##1}}%
 \begin{rep@theorem}}%
 {\end{rep@theorem}}}
\newtheorem{prop}{\bf Proposition}[section]
\newtheorem{thm}[prop]{\bf Theorem}
\newtheorem{lem}[prop]{\bf Lemma}
\newtheorem{cor}[prop]{\bf Corollary}
\newtheorem{claim}[prop]{\bf Claim}
\theoremstyle{definition}
\newtheorem{defi}[prop]{Definition}
\newtheorem{rmk}[prop]{Remark}
\newtheorem{ex}[prop]{Example}
\newcommand{\PSL}{\mathrm{PSL}(2,\mathbb{C})}
\newcommand{\map}{\rightarrow}
\title{On commensurability of fibrations on a hyperbolic 3-manifold}
\author{Hidetoshi Masai}
\address{Department of Mathematical and Computing Sciences, Tokyo Institute of
Technology, O-okayama, Meguro-ku, Tokyo 152-8552 Japan}
\email{masai9 at is.titech.ac.jp}
\date{}
\subjclass[2000]{Primary~57M25. Secondary~57N10}
\begin{document}

\begin{abstract}
We discuss fibered commensurability of fibrations on a hyperbolic 3-manifold,
a notion introduced by Calegari, Sun and Wang.
We construct manifolds with non-symmetric but commensurable fibrations on the same fibered face.
We also prove that if a given manifold $M$ does not have any hidden symmetries, then $M$ does not admit
non-symmetric but commensurable fibrations.
Finally, Theorem 3.1 of Calegari, Sun and Wang shows that every hyperbolic fibered commensurability class contains a unique minimal element.
In this paper we provide a detailed discussion on the proof of the theorem in the cusped case.
\end{abstract}
\maketitle

\section{Introduction}
In this paper, we are mainly interested in fibered hyperbolic 3-manifolds with the first Betti number greater than or equal to $2$.
Thurston \cite{Thu.norm} showed that such a manifold admits infinitely many distinct fibrations (see also \S \ref{cms.mfd}).
It is an interesting question to investigate the relationship between such fibrations.

In \cite{CSW}, Calegari, Sun and Wang defined the notion of fibered commensurability, 
which gives rise to an equivalence relation on fibrations.
An {\em automorphism} on a surface is the isotopy class of a self-homeomorphism of the surface.
For any fibration on a 3-manifold, we have the pair $(F,\phi)$ of the fiber surface $F$, 
and the monodromy automorphism $\phi$.
Since the monodromy is determined up to conjugacy in the mapping class group of $F$, we use the notation $(F,\phi)$ to denote the conjugacy class.
Then commensurability of fibrations is defined as follows.
\begin{defi}[\cite{CSW}]\label{def.cover}
A pair $(\widetilde{F},\widetilde{\phi})$ {\em covers} $(F,\phi)$ if there is a finite cover $\pi:\widetilde{F}\map F$ and representative homeomorphisms $\tilde{f}$ of $\widetilde\phi$ and $f$ of
$\phi$ so that $\pi \widetilde{f} = f \pi$ as maps $\widetilde{F}\map F$.
\end{defi}
\begin{defi}[\cite{CSW}]\label{def.cms}
Two pairs $(F_1, \phi_1)$ and $(F_2, \phi_2)$ are {\em commensurable} if there is a surface $\widetilde{F}$,
automorphisms $\widetilde{\phi_1}$ and $\widetilde{\phi_2}$, and nonzero integers $k_1$ and $k_2$, so that $(\widetilde{F},\widetilde\phi_i)$ covers
$(F_i,\phi_i)$ for $i = 1,2 $ and if $\widetilde{\phi_1}^{k_1} = \widetilde{\phi_2}^{k_2}$ as automorphisms of $\widetilde{F}$.
\end{defi}
%
For the remainder of the paper, we consider fibrations on hyperbolic 3-manifolds.
In this case, the monodromy of each fibration is always {\em pseudo-Anosov} (see \S \ref{sec.PA} for the definition).
The {\em normalized entropy} of a conjugacy class $(F,\phi)$ is defined as $\chi(F)\log(\lambda(\phi))$, 
where $\chi(F)$ is the Euler characteristic of $F$ and $\lambda(\phi)$ is the dilatation of $\phi$.
In \S \ref{sec.defi}, we observe that the normalized entropies of commensurable fibrations on 
the same hyperbolic 3-manifold agree.
Then we demonstrate an example of a manifold such that two of its fibrations are commensurable if and only if 
they share the same normalized entropy.
We also give an example of a manifold with two non-commensurable fibrations of the same normalized entropy.

In this paper, we study commensurable fibrations on a hyperbolic 3-manifold in the context of a fibered face.
A {\em fibered face} is a face of the Thurston norm ball whose rational points correspond to fibrations of the 3-manifold 
 and a {\em fibered cone} is a cone over a fibered face (see \S \ref{sec.entropy} for details).
Two fibrations on $M$ are said to be {\em symmetric} if there exists a self-homeomorphism $\varphi:M\map M$ that
maps one to the other.
In \cite[Remark 3.9]{CSW}, Calegari, Sun, and Wang asked if there is an example of two fibrations on the same closed manifold, which are commensurable but have fibers distinguished by their genera.
The following theorem provides such a construction in the cusped case.
In this theorem fibers are distinguished by their Euler characteristics (see \S\ref{cms.mfd} for a proof).

\begin{thm}\label{thm.genus}
There are hyperbolic 3-manifolds with non-symmetric but non-commensurable fibrations whose corresponding elements in $H^1(M;\mathbb{Z})$
are in the same fibered cone.
\end{thm}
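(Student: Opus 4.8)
The strategy is to separate the two fibrations by the normalized entropy. As observed in \S\ref{sec.defi}, commensurable fibrations on a fixed hyperbolic $3$-manifold have equal normalized entropy $\chi(F)\log\lambda(\phi)$; equivalently, since a fiber has negative Euler characteristic, they have equal $\|u\|\,\log\lambda$, where $\|u\|=|\chi(F)|$ is the Thurston norm of the associated class $u\in H^1(M;\mathbb{Z})$. The same quantity obstructs symmetry: a self-homeomorphism $\varphi\colon M\map M$ taking one fibration to the other would restrict to a homeomorphism of fibers conjugating the monodromies, hence would preserve both $\chi(F)$ and $\lambda(\phi)$. It therefore suffices to exhibit one hyperbolic $3$-manifold $M$ carrying two fibrations in a common fibered cone with \emph{different} normalized entropies; such a pair is simultaneously non-symmetric and non-commensurable.

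Let $M$ be any fibered hyperbolic $3$-manifold with first Betti number at least $2$, so that its Thurston norm ball has a fibered face $\Delta$ of dimension $b_1(M)-1\ge 1$, and let $C$ be the corresponding fibered cone. Writing $h(u)=\log\lambda(\phi_u)$ for $u\in C$, I would invoke the theorem of Fried that $1/h$ extends to a continuous function on the closed cone, homogeneous of degree $1$, concave, strictly positive on the interior of $C$ and vanishing on the boundary (McMullen's analysis of the Teichm\"uller polynomial shows in addition that $h$ is real-analytic and convex). A concave function that is positive inside and zero on the boundary of $\Delta$ cannot be constant, so $h$ is non-constant on the open face, while the normalized entropy $\|u\|\,h(u)$, being homogeneous of degree $0$, is a function of the projective class of $u$ and equals $h$ on the cross section $\{\|u\|=1\}$. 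Since $h$ is continuous and non-constant and the rational rays are dense in $C$, there exist two primitive integral classes $u_1,u_2\in C$ with $\|u_1\|\,h(u_1)\neq\|u_2\|\,h(u_2)$. The fibrations determined by $u_1$ and $u_2$ then lie in the same fibered cone and have distinct normalized entropies, which proves the theorem.

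The argument above is abstract and applies to every such $M$; to record a concrete example I would run it on a manifold with explicitly known data, such as the magic manifold, whose fibered faces, Thurston norm, and the dilatations of its monodromies over lattice points have been computed. There one can name two primitive classes $u_1,u_2$ in a single fibered cone, read off $\|u_i\|=|\chi(F_{u_i})|$ and the dilatations $\lambda_i$, and verify $|\chi(F_{u_1})|\log\lambda_1\neq|\chi(F_{u_2})|\log\lambda_2$ by hand. I expect this explicit verification to be the only delicate point: the convexity of $h$ guarantees that suitable classes exist and are in fact abundant, but certifying that a chosen pair of integral classes has genuinely distinct normalized entropy, rather than values that coincide through an arithmetic accident of the particular polynomial, requires the concrete entropy computation for the chosen manifold.
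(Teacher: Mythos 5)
Your argument is internally sound, but it proves the wrong statement. The word ``non-commensurable'' in the statement of Theorem \ref{thm.genus} is a typo: as the abstract (``manifolds with non-symmetric but commensurable fibrations on the same fibered face''), the introduction (where the theorem is offered as an answer to the question of Calegari--Sun--Wang about \emph{commensurable} fibrations whose fibers are distinguished by genus), and all of \S\ref{cms.mfd} make clear, the intended assertion is the existence of non-symmetric but \emph{commensurable} fibrations in the same fibered cone. That is the interesting statement precisely because, by Theorem \ref{thm.hidden}, such pairs cannot exist on manifolds without hidden symmetries; your reading, by contrast, yields a statement that (as you yourself note) holds in abundance on every fibered hyperbolic manifold with $b_1\geq 2$, which should have been a warning sign.

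The deeper issue is that your method is structurally incapable of proving the intended theorem. Equal normalized entropy is a \emph{necessary} condition for commensurability (Proposition \ref{prop.normalized_ent}), so entropy --- or any invariant of commensurability classes --- can only ever certify \emph{non}-commensurability; it can never produce the commensurable, non-symmetric pair the theorem requires. The paper's proof runs in the opposite direction: one starts with two classes $\omega_1\neq\pm\omega_2$ in the same fibered cone of $M=S^3\setminus 6^2_2$ that are \emph{symmetric} (Example \ref{Hironaka}: the amphicheirality symmetry sends $nU+mT\mapsto -nU+mT$), so the two fibrations have identical pairs $(F,\phi)$ and are trivially commensurable; one then passes to the degree-$n$ cyclic (dynamical) cover $p_n:M_n\map M$ determined by $\omega_1$ (Lemma \ref{lem.sym}). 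Upstairs, commensurability is preserved, since each pulled-back fibration covers one of the original, mutually conjugate, fibered pairs; but for large $n$ the fiber of $p_n^{\ast}(\omega_2)$ becomes a nontrivial connected cover of $F_2$, while the fiber of $p_n^{\ast}(\omega_1)$ is still $F_1$, so the Euler characteristics of the fibers differ and the two fibrations on $M_n$ are no longer symmetric. If you want to salvage your write-up, the entropy discussion can stand as a proof that fibrations with distinct normalized entropies are simultaneously non-symmetric and non-commensurable, but it cannot serve as a proof of Theorem \ref{thm.genus}.
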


On the other hand, if $M$ has no hidden symmetries, then such fibrations do not exist.
Here, a (finite-volume) hyperbolic 3-manifold $M = \mathbb{H}^3/\Gamma$ is said to have {\em hidden symmetries} if
$[C^+(\Gamma):N^+(\Gamma)]>1$, where $C^+ (\Gamma)$ (resp. $N^+ (\Gamma)$) is the 
commensurator (resp. normalizer) of $\Gamma$, see \S \ref{cms.mfd} for details.
\begin{thm}\label{thm.hidden}
Suppose that $M$ is a hyperbolic $3$-manifold that does not have hidden symmetries.
Then, any pair of fibrations of $M$ is either symmetric or non-commensurable, but not both.
\end{thm}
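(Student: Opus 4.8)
The plan is to prove that, for fibrations on $M$, the properties \emph{symmetric} and \emph{commensurable} describe exactly the same pairs; the theorem is then immediate, since ``either symmetric or non-commensurable, but not both'' says precisely that exactly one of \{symmetric, non-commensurable\} holds, i.e. that symmetric and commensurable coincide. One inclusion is easy and needs no hypothesis on hidden symmetries: if $\varphi\colon M\to M$ carries fibration $1$ to fibration $2$, it restricts to a homeomorphism $F_1\to F_2$ conjugating $\phi_1$ to $\phi_2$, and taking $\widetilde F=F_1$ together with the identity cover of $(F_1,\phi_1)$ and the degree-one cover $\varphi$ of $(F_2,\phi_2)$ verifies Definition~\ref{def.cms} with $k_1=k_2=1$. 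This already rules out the combination ``symmetric and non-commensurable.''

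For the converse, suppose $(F_1,\phi_1)$ and $(F_2,\phi_2)$ are commensurable, witnessed by $\widetilde F$, automorphisms $\widetilde\phi_i$, and integers $k_i$ with $\widetilde\phi_1^{\,k_1}=\widetilde\phi_2^{\,k_2}=:\Phi$. I would pass to mapping tori: the cover $(\widetilde F,\widetilde\phi_i)\to(F_i,\phi_i)$ composed with the $k_i$-fold cyclic cover in the fiber direction yields a fibered covering $p_i\colon M_\Phi\to M$ carrying the mapping-torus fibration $\mathcal F$ of $\widetilde M:=M_\Phi$ onto fibration $i$. Because $\widetilde\phi_1^{\,k_1}=\widetilde\phi_2^{\,k_2}$, the two constructions produce the \emph{same} fibered manifold $(\widetilde M,\mathcal F)$, so we obtain a single finite cover $\widetilde M$ of $M$ equipped with two fibered covering maps $p_1,p_2$. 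Setting $\widetilde\Gamma_1:=(p_1)_*\pi_1(\widetilde M)$ and identifying $\widetilde M=\mathbb H^3/\widetilde\Gamma_1$ via $p_1$, Mostow rigidity makes the hyperbolic metric on $\widetilde M$ agree with the metric pulled back along $p_2$, so $p_2$ is a local isometry and lifts to an isometry $g$ of $\mathbb H^3$ with $g\widetilde\Gamma_1 g^{-1}=:\widetilde\Gamma_2\leq\Gamma$.

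The key point is that this comparison isometry lies in the commensurator: since $\widetilde\Gamma_1$ is finite-index in $\Gamma$ and $\widetilde\Gamma_2=g\widetilde\Gamma_1 g^{-1}\leq\Gamma$ is finite-index as well, the groups $\Gamma$ and $g\Gamma g^{-1}$ both contain $\widetilde\Gamma_2$ with finite index, whence $g\in C^+(\Gamma)$. Now the hypothesis enters: no hidden symmetries means $C^+(\Gamma)=N^+(\Gamma)$, so $g\Gamma g^{-1}=\Gamma$ and $g$ descends to a self-homeomorphism (indeed a self-isometry) $\psi\colon M\to M$ satisfying $\psi\circ p_1=p_2$. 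As $p_1$ sends $\mathcal F$ onto fibration $1$ and $p_2$ sends $\mathcal F$ onto fibration $2$, the relation $\psi p_1=p_2$ forces $\psi$ to carry fibration $1$ to fibration $2$; hence the two fibrations are symmetric. This rules out ``non-symmetric and commensurable'' and completes the equivalence.

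I expect the crux to be the middle step: arranging the common cover so that it carries two honestly fibered covering maps, and then realizing the second covering by a single isometry $g$ relative to the identification supplied by the first. Once that geometric set-up is in place, membership $g\in C^+(\Gamma)$, the descent to $\psi$ under $C^+(\Gamma)=N^+(\Gamma)$, and the verification $\psi p_1=p_2$ are essentially bookkeeping—but the bookkeeping must be carried out with care, so as to guarantee that $\psi$ transports the fibration structure and not merely the underlying manifold.
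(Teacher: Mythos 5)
Your proof is correct, and the mechanism at its heart --- produce a conjugating isometry by Mostow rigidity, observe it lies in the commensurator of the holonomy group, invoke $C^+(\Gamma)=N^+(\Gamma)$ to place it in the normalizer, and descend it to a fibration-preserving self-homeomorphism of $M$ --- is exactly the mechanism of the paper's proof. The difference is in the geometric set-up. The paper goes \emph{down}: it invokes Theorem \ref{Hyperbolic} to obtain the unique minimal element $(N,\mathcal{G})$ of the commensurability class, takes the two covering maps $p_1,p_2\colon M\to N$, and compares the two images $\Gamma_1,\Gamma_2$ of $\pi_1(M)$ inside $\pi_1(N)$; the conjugator $h$ with $h\Gamma_1h^{-1}=\Gamma_2$ then lies in $C^+(\Gamma_1)=N^+(\Gamma_1)$, forcing $\Gamma_1=\Gamma_2$ and yielding $\varphi$ with $p_1\varphi=p_2$. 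You go \emph{up}: you build the common fibered cover $\widetilde M$ directly from Definition \ref{def.cms} (the mapping torus of $\Phi=\widetilde\phi_1^{\,k_1}=\widetilde\phi_2^{\,k_2}$) and compare the two finite-index subgroups $\widetilde\Gamma_1,\widetilde\Gamma_2$ of $\Gamma=\rho(\pi_1(M))$. Your route buys independence from Theorem \ref{Hyperbolic}, which is the most technical ingredient of the paper (its cusped case occupies \S 2.3), at the cost of having to verify that the two cyclic-cover constructions really give one manifold with two fibered covering maps --- which you do. The paper's route buys a cleaner endgame ($\Gamma_1=\Gamma_2$ on the nose) but is logically heavier. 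Both correctly dispose of the easy direction that symmetric fibrations are commensurable, and your final bookkeeping $\psi p_1=p_2\Rightarrow\psi(\mathcal F_1)=\mathcal F_2$ is sound since $p_1^{-1}(\mathcal F_1)$ and $p_2^{-1}(\mathcal F_2)$ are both isotopic to the fibration of $\widetilde M$.
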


Theorem \ref{thm.genus} and Theorem \ref{thm.hidden} are motivated by the fact that up to isotopy,
there are only finitely many commensurable fibrations on a hyperbolic 3-manifold.
This fact is a corollary of the following.
\begin{thm}[see also Theorem 3.1 of \cite{CSW}]\label{csw.Hyperbolic}
Every commensurability class of hyperbolic fibered pairs contains a unique (orbifold) minimal element.
\end{thm}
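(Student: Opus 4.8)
The plan is to recast the problem in terms of the canonical pseudo-Anosov suspension flow, where Mostow rigidity and the theory of commensurators can be brought to bear. To a hyperbolic fibered pair $(F,\phi)$ I would associate its mapping torus $M_\phi=\mathbb{H}^3/\Gamma$ together with Fried's suspension flow $\psi_\phi$ of the pseudo-Anosov $\phi$, recorded with its two transverse singular foliations. The point of this reduction is that the fibration is recovered as a cross-section of $\psi_\phi$ and $\phi$ as the first-return map, so the flow (with its foliations) is a canonical invariant of the pair; moreover, by Fried's theory all fibrations in a single fibered cone arise as cross-sections of one and the same flow, so the flow is determined by the fibered face up to scale.

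First I would show that fibered commensurability of $(F_1,\phi_1)$ and $(F_2,\phi_2)$ is equivalent to commensurability of the pairs $(M_{\phi_i},\psi_{\phi_i})$, in the sense that there is a common finite cover $\widetilde M$ carrying a single flow to which both lift up to reparametrization. In one direction, a cover $(\widetilde F,\widetilde\phi)\to(F,\phi)$ as in Definition \ref{def.cover} induces a covering of mapping tori intertwining the flows; and the relation $\widetilde\phi_1^{k_1}=\widetilde\phi_2^{k_2}=:\Phi$ of Definition \ref{def.cms} says precisely that on the mapping torus $M_\Phi$, which is a cyclic cover of each $M_{\widetilde\phi_i}$ in the flow direction, both fibrations appear as cross-sections of the single flow $\psi_\Phi$. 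Thus the entire commensurability class is organized as a family of finite covers all carrying compatible lifts of one flow.

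Next I would construct the candidate minimal element as a quotient by the full group of flow-preserving commensurations. Fixing a lift $\widetilde\psi$ to $\mathbb{H}^3$, let $G\le\mathrm{Isom}^+(\mathbb{H}^3)$ be the group of isometries that are commensurable with $\Gamma$ and preserve the oriented foliations of $\widetilde\psi$. If $G$ is discrete, then $\mathbb{H}^3/G$ is a fibered orbifold that is covered by every member of the class, and its maximality makes it unique; this is the asserted minimal (possibly orbifold) element, and the covering relation of Definition \ref{def.cover} extends to the orbifold case so that everything in the class covers it.

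The hard part will be the discreteness of $G$. In the non-arithmetic case this is immediate: $G$ lies in the commensurator $C^+(\Gamma)$, which is discrete by Margulis, so $\mathbb{H}^3/G$ is a genuine orbifold and one is done. The difficulty is the arithmetic case, where $C^+(\Gamma)$ is dense in $\mathrm{Isom}^+(\mathbb{H}^3)$, so discreteness cannot come for free and must be extracted from the flow itself. Here I would argue that the transverse singular foliations of a pseudo-Anosov flow are too rigid to be invariant under a non-discrete group: the singular orbits form a discrete, combinatorially constrained set that any flow-preserving isometry must permute, which bounds $G$ and forces it to be discrete. In the cusped setting, which is the case this paper treats in detail, one must in addition control the flow near the cusps, since the punctures of $F$ are the intersections of the cusp cross-sections with the fiber and flow-preserving symmetries must permute the cusps compatibly. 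I expect this cusped, arithmetic discreteness to be the main obstacle, and the rigidity of the invariant foliations to be the key that resolves it.
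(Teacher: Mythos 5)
Your overall architecture coincides with the paper's (and with Calegari--Sun--Wang's): package the pseudo-Anosov data into a canonical structure on the universal cover of the mapping torus, let $G$ be the full group of structure-preserving symmetries, prove $G$ is discrete, and take the quotient as the minimal orbifold. The paper, however, never touches the hyperbolic metric or the commensurator: it uses the \emph{singular Sol metric} determined by $(\mathcal{F}^s,\mu^s)$ and $(\mathcal{F}^u,\mu^u)$ and lets $\Lambda$ be its full isometry group, so there is no arithmetic/non-arithmetic dichotomy and Margulis is never invoked. This is not a cosmetic difference, because the one case where you cannot quote Margulis is exactly where your argument has a gap.

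The gap: in the arithmetic case you argue that a flow-preserving isometry must permute the discrete set of singular orbits, and that this ``bounds $G$ and forces it to be discrete.'' Permuting a discrete set does not make a group discrete --- you must also kill the identity component of the stabilizer, i.e.\ rule out a continuous family of symmetries fixing every singular orbit. This is precisely where the paper has to work: an isometry of a fiber (a singular Euclidean plane, completed so that the puncture-singularities become honest points forming a set $E$) that fixes $E$ pointwise must fix a geodesic joining two points of $E$, hence preserves every leaf of the lifted stable and unstable foliations meeting that geodesic, hence (acting locally as a translation) fixes such leaves pointwise; since the leaves of a pseudo-Anosov invariant foliation are \emph{dense}, the isometry is the identity. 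The density of leaves is the essential input and never appears in your sketch; ``rigidity of the invariant foliations'' names the conclusion rather than proving it. You also omit the flow-direction step (an isometry near the identity cannot translate along the flow because the fiberwise distance $\sqrt{e^{2t}x^2+e^{-2t}y^2}$ between singular orbits is not locally constant in $t$), and you only gesture at the issue that is the stated point of this paper's contribution: when the singularities sit at punctures the singular Sol metric is incomplete, so the whole discreteness argument must be run on the metric completion $\bar{S}$ of a fiber, with $E=\bar{S}\setminus S$ replacing the interior singular set.
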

Here the notion of a fibered pair is a generalization of the notion of a pair $(F,\phi)$, see \S \ref{sec.defi} for details.
The proof in \cite{CSW} works for the closed case.
In \S \ref{sec.defi} we extend it to the case where the manifolds have boundary (Theorem \ref{Hyperbolic}).
Further, as a corollary of this extension, we show examples of manifolds such that every fibration is the minimal element
in its commensurability class (Corollary \ref{HKT}).

Commensurability classes are defined using the transitive hull of the relation in Definition 1.2.
In \S \ref{sec.defi} we also discuss the transitivity of commensurability.
We show that if the automorphisms are pseudo-Anosov (that is to say, in the hyperbolic case),
then commensurability is transitive.

{\bf Acknowledgment.}
I would like to thank Sadayoshi Kojima and members of his research group for helpful conversations and advice.
I would also like to thank Eriko Hironaka for her inspiring series of lectures about pseudo-Anosov maps at Tokyo Tech and for telling me the argument in Example \ref{Hironaka}.
I would also like to thank Neil Hoffman, Tamas Kalman 
and Hongbin Sun for catching some errors in an earlier version of this paper.
Finally I would like to thank the anonymous referee for helpful suggestions. 
This work was partially supported by JSPS Research Fellowship for Young Scientists.

\section{Preliminaries}\label{sec.defi}
In this section, we recall the definitions and basic facts about commensurability of fibrations.
Most of the contents in this section are discussed in \cite{CSW}.
In this paper, unless otherwise stated, by a {\em surface} and a {\em hyperbolic 3-manifold}, we mean a compact connected orientable 2-manifold possibly with boundary and of negative Euler characteristic, and a connected, orientable, complete hyperbolic 3-manifold of finite volume respectively.

\subsection{Fibered pairs}\label{sec.fcms}
Given homeomorphism $f:F\map F$, there is an associated 3-manifold $M$ called the {\em mapping torus} of $f$;
$$ M = F\times [0,1]/((f(x),0)\sim (x,1)).$$
Since the mapping tori of conjugate automorphisms are homeomorphic to each other,
each conjugacy class $(F,\phi)$ determines a homeomorphism class of 3-manifolds by taking mapping tori.
We use the notation $[F,\phi]$ to denote the mapping torus associated to $(F,\phi)$.
$F$ is called the {\em fiber} and $\phi$ is called the {\em monodromy} of $[F,\phi]$.

In this paper, we focus on fibrations of a fixed hyperbolic 3-manifold $M$.
Each fibration on $M$ over the circle determines an element  of $H^1(M;\mathbb{Z})$ and if $\omega\in H^1(M;\mathbb{Z})$ corresponds to a fibration, then there is an associated pair $(F,\phi)$ of $F$ the fiber and $\phi$ the monodromy (i.e. $[F,\phi]$ is homeomorphic to $M$).
Note that this correspondence of $\omega$ and $(F,\phi)$ is well-defined up to the conjugation of $(F,\phi)$.

In \S \ref{hyperbolic}, we discuss Theorem 3.1 of \cite{CSW} for the case of fibered manifolds with boundary.
To state the theorem it is convenient to define a {\em fibered pair} which is a generalization of a pair of type $(F,\phi)$.
We also enlarge our attention to orbifolds.
An $n$-orbifold is a space that is locally modeled on a quotient of an open ball in $\mathbb{R}^n$ by a finite group.
See \cite{W} and Chapter 13 of \cite{Thur} for more details.
\begin{defi}[\cite{CSW}]
A {\em fibered pair} is a pair $(M,\mathcal{F})$ where $M$ is a compact 3-manifold with boundary a union of tori and Klein bottles, 
and $\mathcal{F}$ is a foliation by compact surfaces.
More generally, an orbifold fibered pair is a pair $(O,\mathcal{G})$ where $O$ is a compact 3-orbifold, and $\mathcal{G}$ is a foliation of $O$ by compact $2$-orbifolds.
\end{defi}
\begin{defi}[\cite{CSW}]
A fibered pair $(\widetilde{M}, \widetilde{\mathcal{F}})$ {\em covers} $(M,\mathcal{F})$
if there is a finite covering of manifolds 
$\pi:\widetilde{M}\map M$ such that $\pi^{-1}(\mathcal{F})$ is isotopic to $\widetilde{\mathcal{F}}$.
Two fibered pairs $(M_1,\mathcal{F}_1)$ and $(M_2, \mathcal{F}_2)$ are {\em (fibered) commensurable} if there is a third fibered pair $(\widetilde{M}, \widetilde{\mathcal{F}})$
that covers both.
\end{defi}

For a given pair $(F,\phi)$, the mapping torus 
$[F,\phi]$ has a foliation $\mathcal{F}$ by surface leaves, which are homeomorphic to $F$ and hence
there is a corresponding fibered pair $([F,\phi], \mathcal{F})$.

Unlike the case of commensurability in Definition \ref{def.cms}, 
it is easy to see that commensurability of fibered pairs is transitive. 
Suppose $(M_i,\mathcal{F}_i)$ and $(M_{i+1},\mathcal{F}_{i+1})$ are commensurable for $i = 1,2$
and $(\widetilde{M}_{12},\widetilde{\mathcal{F}}_{12})$ (resp.  $(\widetilde{M}_{23},\widetilde{\mathcal{F}}_{23})$)
is a common covering pair of $(M_1,\mathcal{F}_1)$ and $(M_{2},\mathcal{F}_{2})$ (resp. $(M_2,\mathcal{F}_2)$ and $(M_{3},\mathcal{F}_{3})$).
Then there is a covering $p:\widetilde{N}\map M_2$ that corresponds to 
$p_{*}^{12}\pi_1(\widetilde{M_{12}})\cap p_{*}^{23}\pi_1(\widetilde{M_{23}})<\pi_1(M_2)$ 
where $p^{12}:\widetilde{M_{12}}\map M_2$ and $p^{23}:\widetilde{M_{23}}\map M_2$ are the covering maps.
Then $(\widetilde{N},p^{-1}(\mathcal{F}_2))$ covers both $(M_1,\mathcal{F}_1)$ and $(M_3,\mathcal{F}_3)$.
Thus we see that fibered commensurability is a transitive relation.

We define another equivalence relation on fibered pairs so that the covering relation will be a partial order.

\begin{defi}[\cite{CSW}]
We say that two fibered pairs $(M,\mathcal{F})$ and $(N,\mathcal{G})$ are {\em covering equivalent} if each covers the other. 
We call a covering equivalence class {\em minimal} if no representative covers any element of another covering equivalence class.
\end{defi}

\begin{rmk}[see also Remark 2.9 of \cite{CSW}]
Each covering equivalence class of the fibered pair associated to $(F,\phi)$
contains exactly one fibered pair unless $\phi$ is periodic.
Therefore, when we consider pseudo-Anosov automorphisms,
by abusing notation, we use the word ``element" for each covering equivalent class.
\end{rmk}

\subsection{Pseudo Anosov automorphisms}\label{sec.PA}
The automorphisms on a compact surface are classified into 3 types;
periodic, reducible, and {\em pseudo-Anosov} (see \cite{Thu}, \cite{CB}).
By a result of Thurston, it turns out that the (interior of) mapping torus $[F,\phi]$ admits hyperbolic metric of finite volume
if and only if
the automorphism $\phi$ is pseudo-Anosov (see \cite{Thu}, cf. \cite{OK}).
\begin{defi}
A homeomorphism $f:F\map F$ is a {\em pseudo-Anosov homeomorphism} if there is a pair of transverse measured singular foliations
$(\mathcal{F}^s,\mu^s)$ and $(\mathcal{F}^u,\mu^u)$ on $F$ 
and a positive real number $\lambda$
so that $f(\mathcal{F}^u) = \mathcal{F}^u, f(\mu^u) = \lambda \mu^u$ and $f(\mathcal{F}^s) = \mathcal{F}^s, f(\mu^s) = (1/\lambda) \mu^s$.
$(\mathcal{F}^u,\mu^u)$ ( resp. $(\mathcal{F}^s,\mu^s)$) is called the unstable (resp. stable) measured singular foliation associated to $f$.
\end{defi}
See Figure \ref{pA} for a shape of the singularities of $(\mathcal{F}^s,\mu^s)$ and $(\mathcal{F}^u,\mu^u)$.
An automorphism $\phi$ is said to be {\em pseudo-Anosov} if it has a pseudo-Anosov homeomorphism as a representative.
We call the positive real number $\lambda$ the {\em dilatation} of pseudo-Anosov automorphism $\phi$ and denote it by $\lambda(\phi)$.

In some cases, it is convenient to consider the restriction of automorphisms on  the interior ${{\rm Int}(F)}$ of $F$.
By considering $\phi\vert_{{\rm Int}(F)}$, we get a pseudo-Anosov automorphism on ${\rm Int}(F)$ and
by abusing the notation we also denote it by $\phi$.
Note that  ${\rm Int}(F)$ can be regarded as a surface with finitely many punctures,
each corresponding to a boundary component of $F$.
Then the singularities of $\mathcal{F}^s$ and $\mathcal{F}^u$ lie on ${\rm Int}(F)$ or the punctures .
We denote the set of points and punctures that correspond to the singular points of associated singular foliations by Sing($\phi$).
\begin{figure}[h]
\includegraphics[bb = 0 0 620 618, scale=0.2]{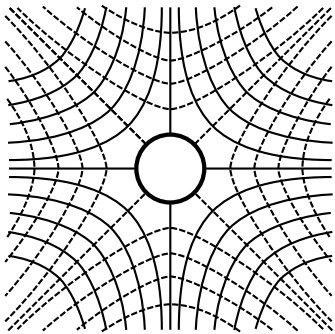}
\caption{A shape of a singularity of degree $4$ at the boundary.}
\label{pA}
\end{figure}

\subsection{Uniqueness of the minimal element}\label{hyperbolic}
In this subsection, we give a detailed discussion of Theorem \ref{csw.Hyperbolic} for the case where manifolds have boundary.
By passing to a finite covering we may assume $\mathcal{F}$ to be co-orientable and hence $M$ fibers over the circle i.e.
$M$ is the mapping torus $[F,\phi]$ of some surface $F$ and pseudo-Anosov map $\phi$.
Since we are dealing with commensurability classes, it suffices to discuss the case where the foliations are co-orientable.
The proof in \cite{CSW} assumes that all singular points of the singular foliations associated to $\phi$ lie on the interior of $F$.
We prove this result for the case where some of the singular points lie on the boundary.
This corresponds to the case where Sing($\phi$) contains some punctures, 
by restricting the automorphism on  the interior ${\rm Int}(F)$ of $F$.

\begin{thm}[see also \cite{CSW}]\label{Hyperbolic}
Let $(M,\mathcal{F})$ be a hyperbolic co-orientable fibered pair and $(F,\phi)$ be the pair associated to $(M,\mathcal{F})$.
Then the commensurability class of $(M,\mathcal{F})$ contains a unique minimal (orbifold) element.
Moreover, if ${\rm Int}(F)\cap {\rm Sing}(\phi) = \emptyset$, then the minimal element is a manifold.
\end{thm}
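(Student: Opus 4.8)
The plan is to construct the minimal element explicitly from the commensurator and to isolate the pseudo-Anosov suspension flow as the invariant that fibered commensurability remembers. Write $M=\mathbb{H}^3/\Gamma$ and let $\psi$ be the suspension flow of $\phi$ on $M$, whose flowlines together with the suspended stable and unstable foliations form a pseudo-Anosov flow. First I would check that this flow is canonical for the whole commensurability class: if $\pi:(\widetilde{M},\widetilde{\mathcal{F}})\to(M,\mathcal{F})$ is a fibered covering, then the suspension flow on $\widetilde{M}$ is exactly the $\pi$-pullback of $\psi$, because $\pi^{-1}(\mathcal{F})$ is isotopic to $\widetilde{\mathcal{F}}$ and a finite fibered cover of a mapping torus is again a mapping torus carrying the lifted monodromy. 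Lifting to the universal cover then produces a flow $\widetilde{\psi}$ on $\mathbb{H}^3$ that is $\Gamma$-invariant and depends on the class only up to conjugacy in $C^+(\Gamma)$ and isotopy.

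Next I would introduce the candidate minimal group
$$\Delta=\{\,g\in C^+(\Gamma): g\ \text{preserves}\ \widetilde{\psi}\ \text{up to isotopy}\,\}\le \mathrm{Isom}^+(\mathbb{H}^3),$$
so that $\Gamma\le\Delta$, and set $O=\mathbb{H}^3/\Delta$ with the descended foliation $\mathcal{G}$. The crucial step is to prove that $\Delta$ is discrete; this is precisely what separates the fibered picture from coarse commensurability, where $C^+(\Gamma)$ itself may be dense in $\PSL$ in the arithmetic case. I would argue that the closure of $\Delta$ cannot contain a positive-dimensional subgroup, since a one-parameter family of isometries preserving $\widetilde{\psi}$ would descend to a continuous family of self-homeomorphisms of the flow, contradicting the rigidity of a pseudo-Anosov flow. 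Once $\Delta$ is discrete it contains the lattice $\Gamma$ with finite index, so $O$ has finite volume and $(M,\mathcal{F})\to(O,\mathcal{G})$ is a finite fibered covering, which places $(O,\mathcal{G})$ in the commensurability class. For any $(M',\mathcal{F}')=(\mathbb{H}^3/\Gamma',\mathcal{F}')$ in the class, the canonicity of the flow gives $\Gamma'\le\Delta$ after conjugation, so $M'$ covers $O$ compatibly with the foliations; thus every element of the class covers $O$. Maximality of $\Delta$ makes $O$ minimal, and any other minimal element, being commensurable with $O$, is forced to coincide with it by the same maximality, giving uniqueness.

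For the final clause I would analyze the torsion of $\Delta$. Orbifold points of $O$ arise from torsion elements, which are elliptic isometries fixing a geodesic axis while preserving $\widetilde{\psi}$; downstairs such an axis forces a singular closed orbit of the descended pseudo-Anosov flow in the interior of $O$, which pulls back to a point of $\mathrm{Int}(F)\cap\mathrm{Sing}(\phi)$. Hence when $\mathrm{Int}(F)\cap\mathrm{Sing}(\phi)=\emptyset$ I expect $\Delta$ to be torsion-free and $O$ to be a manifold.

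I expect the main obstacle to be the discreteness of $\Delta$ in the cusped setting treated here, precisely because the singular orbits may all escape into the cusps. When $\mathrm{Int}(F)\cap\mathrm{Sing}(\phi)=\emptyset$ there are no singular closed geodesics in the interior to pin $\Delta$ down, so the rigidity must instead be extracted from the data carried on the cusp cross-sections: each cusp torus or Klein bottle inherits from the prongs of $\phi$ at the corresponding puncture a canonical combinatorial and Euclidean structure that every element of $\Delta$ must respect, and I would use this boundary structure, rather than interior geodesics, both to rule out indiscreteness and to control the orbifold locus near the cusps. Making this cusp analysis precise, and verifying that $\mathcal{G}$ extends across the cusps of $O$ as an orbifold fibration, is exactly the part of the argument not covered by the closed case of \cite{CSW}.
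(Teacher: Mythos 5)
Your overall architecture (a canonical flow/foliation attached to the commensurability class, a maximal symmetry group $\Delta$ containing $\Gamma$, discreteness of $\Delta$, quotient orbifold as the minimal element, torsion analysis for the manifold clause) matches the shape of the paper's argument, but the one step that constitutes the actual content of the theorem --- discreteness of $\Delta$ --- is not proved. You reduce it to ``the rigidity of a pseudo-Anosov flow,'' which is both unspecified and, as literally stated, false: the suspension flow itself is a continuous one-parameter family of self-homeomorphisms preserving the flow, so the existence of such a family is not a contradiction; what has to be ruled out is a non-discrete family of \emph{isometries} doing so, and that requires an argument. The ``up to isotopy'' in your definition of $\Delta$ compounds the problem: it is then unclear that $\Delta$ is closed under composition in a useful way, that the condition defining $\Delta$ is closed (which your Lie-theoretic ``closure contains a one-parameter subgroup'' step needs), or that a genuine foliation $\mathcal{G}$ descends to $\mathbb{H}^3/\Delta$. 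You explicitly defer ``making the cusp analysis precise'' to future work, but in the cusped case that analysis \emph{is} the proof.

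The paper avoids all of these issues by not working in the hyperbolic metric at all. The invariant foliations $\mathcal{F}^s,\mathcal{F}^u$ of $\phi$ determine a canonical singular Sol metric on ${\rm Int}(M)$, shared by every pair in the commensurability class, and $\Lambda$ is taken to be the full isometry group of its pull-back to the universal cover; such isometries genuinely (not up to isotopy) preserve the foliation by singular Euclidean fibers, so the quotient foliation is automatic. Discreteness is then proved directly: one passes to the metric completion $\bar S$ of a fiber $S$, where the added points $E=\bar S\setminus S$ lie over ${\rm Sing}(\phi)$ (including the punctured singularities --- this is exactly the ``cusp data'' you gesture at); distances between points of $E$ are bounded below, so isometries near the identity fix $E$ pointwise, and density of the leaves of $\mathcal{F}^s,\mathcal{F}^u$ together with the local translation structure forces such an isometry to be the identity; the flow direction is handled by the non-local-constancy of $\sqrt{e^{2t}x^2+e^{-2t}y^2}$. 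Finally, your torsion argument for the manifold clause has its own gap: the axis of an elliptic element of $\Delta$ is a hyperbolic geodesic and there is no reason it should be a (singular) flow line, whereas in the Sol picture the statement ``a nontrivial isometry can fix only singular points'' is a local computation. If you want to salvage your $C^+(\Gamma)$ framework, you should transport the Sol-metric completion argument into it rather than cite flow rigidity as a black box.
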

\begin{proof}
The first part of this proof was proved in \cite{CSW}.
We first recall the argument in \cite{CSW} which we will use here.
The stable and unstable singular foliations $\mathcal{F}^s$ and $\mathcal{F}^u$ associated to $\phi$ determine a unique singular Sol metric on ${\rm Int}(M)$.
Pulling back this metric to the universal cover $\pi:\widetilde{M}\rightarrow{\rm Int}(M)$, $\widetilde{M}$ becomes a simply connected singular Sol manifold.
Each fiber of $\widetilde{M}$ is a singular Euclidean plane. 
Let $\Lambda$ be the full isometry group of the singular Sol metric.
By appealing to the local Sol metric of $\widetilde{M}$, it can be verified that each element of $\Lambda$ preserves the foliation by the singular Euclidean planes.
Since $\pi_1(M)<\Lambda$, we have the covering 
$(\widetilde{M}, \widetilde{\mathcal{F}})/\pi_1(M)\map (\widetilde{M}, \widetilde{\mathcal{F}})/\Lambda$.
Then we see that for any pair $(M',\mathcal{F}')$ commensurable with $(M,F)$ the group $\pi_1(M')$ embeds into $\Lambda$  and hence $(M',\mathcal{F}')$ covers $(\widetilde{M}, \widetilde{\mathcal{F}})/\Lambda$.
Therefore to prove the theorem, it suffices to prove the following claim.

\begin{claim}
$\Lambda$ is discrete with respect to the compact open topology.
\end{claim}

For the proof of this claim, the condition ${\rm Sing}(\phi)\subset {\rm Int}(F)$ is assumed in \cite{CSW}.
We prove this claim without the assumption.
Note that if ${\rm Sing}(\phi)\not\subset {\rm Int}(F)$, the singular Sol metric is not necessarily complete.
Let $\Lambda '<\Lambda$ be the subgroup consisting of isometries that preserve each fiber of $\widetilde{M}$ set-wise.
We first prove that the subgroup $\Lambda'$  is discrete.
Let $S$ be a fiber of $\widetilde{M}$ and $\bar{S}$ be its completion with respect to the singular Euclidean metric. 
Then we will extend $p=\pi\vert_{S}:S\map {\rm Int}(F)$ to a local isometry $\bar{p}:\bar{S} \map {\rm Int}(F) \cup {\rm Sing}(\phi)$.
Let $\{x_i\}$ be a Cauchy sequence in $S$. 
Then $\{p(x_i)\}$ is a Cauchy sequence in ${\rm Int}(F)$ and it converges to either an interior point of $F$ or a point in ${\rm Sing}(\phi)$.
Since $\bar{S}$ consists of the equivalence classes of Cauchy sequences in $S$, we can define $\bar{p}: \displaystyle [(x_i)]\mapsto \lim p(x_i)$.
Since $p$ is a local isometry, $\bar{p}$ is well defined and a local isometry.
Therefore we get $E := \bar{S}\setminus S = \bar{p}^{-1}({\rm Sing}(\phi))$ for the natural extension $\bar{p}$ of $p$. 
Any isometry $\varphi:S\map S$ extends to an isometry $\bar{\varphi}: \bar{S} \map \bar{S}$ 
and by construction we get $\bar{\varphi}(E) = E$.
Suppose there is a sequence $\{\bar{\varphi}_i\}$ of isometries such that $\bar{\varphi}_i\map {\rm id}$.
Since the distances between two distinct points in $E$ are bounded from below by a positive constant,
for large enough $i$, $\bar{\varphi}_i$ must fix $E$ point-wise.
Suppose that $\bar{\varphi}: \bar{S}\map \bar{S}$ is an isometry which preserves $E$ point-wise.
Since $\bar{S}$ is a singular Euclidean plane, we may find two points $e_1,e_2$ in $E$ 
which can be joined by a unique geodesic $\gamma$. 
Then by appealing to the distance from $e_1$ and $e_2$, it follows that $\bar{\varphi}$ preserves $\gamma$ point-wise.
Note that every isometry on $\bar{S}$  leaves the set of leaves of $p^{-1} (\mathcal{F}^s)$ and $p^{-1} (\mathcal{F}^u)$ invariant.
This implies that every leaf that intersects with $\gamma$ is preserved by $\bar{\varphi}$.
Let $l$ be one of such leaves.
Since $\bar\varphi$ is a local isometry of Sol metric, it locally acts as a translation on $\bar{S}$.
Therefore $\bar\varphi$ fixes $l$ point-wise.
Since each leaf of $\mathcal{F}^s$ or $\mathcal{F}^u$ is dense in ${\rm Int}(F)$, the orbit of $l$ under the action of 
the deck transformation group associated to $p$ is also dense in $\bar{S}$.
Hence $\bar{\varphi}$ is identity on a dense subset of $\bar{S}$ and since it is an isometry, we get $\bar{\varphi} = {\rm id}$.
Therefore for large enough $i$, we get $\bar{\varphi}_i = {\rm id}$.  
This proves the discreteness of $\Lambda'$.

The discreteness of the dynamical direction of $\Lambda$ follows from exactly the same argument in \cite{CSW}.
We include the proof for completeness.
Note that each isometry $\varphi\in \Lambda$ extends to the metric completion $\bar{M}$ of $\widetilde{M}$.
We may parametrize each fiber by real numbers $t$ in such a way that 
for any two fixed flow lines $a(t),b(t) \in E(t)\subset \bar{S}(t)$,
the distance between $a(t)$ and $b(t)$ is $\sqrt{e^{2t}x^2+e^{-2t}y^2}$ for some fixed $x$ and $y$ when $|t|$ is small enough.
Then for small $|t|$ the distance between any two points in $E(t)$ are bounded from below by a constant which does not 
depend on $t$.
Therefore since $\sqrt{e^{2t}x^2+e^{-2t}y^2}$ is not a locally constant function,
an isometry $\varphi\in \Lambda$ close enough to the identity must fix each fiber of 
the foliation by the singular Euclidean planes.
Thus we see that $\Lambda$ is discrete.

Since isometries may fix only singular points, if ${\rm Int}(F)\cap {\rm Sing}(\phi) = \emptyset$, 
then $\Lambda$ has no fixed point in $\widetilde{M}$ and the last assertion holds.
\end{proof}

\begin{cor}\label{HKT}
All the fibrations of $M_1 = S^3\setminus 6^2_2$ and the Magic 3-manifold $M_2$ are the minimal elements.
\end{cor}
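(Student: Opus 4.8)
The plan is to read the statement off from Theorem \ref{Hyperbolic}. Fix $M$ to be one of $M_1,M_2$, write $M=\mathbb H^3/\Gamma$ with $\Gamma=\pi_1(M)$, and let $(F,\phi)$ be an arbitrary fibration of $M$ with associated foliation $\mathcal F$ and cohomology class $\omega\in H^1(M;\mathbb Z)$. By the proof of Theorem \ref{Hyperbolic} the minimal element of the commensurability class of $(M,\mathcal F)$ is $(\widetilde M,\widetilde{\mathcal F})/\Lambda$, where $\Lambda$ is the full isometry group of the associated singular Sol metric and $\Gamma<\Lambda$. Since $(M,\mathcal F)$ covers this minimal element, it is itself the minimal element exactly when that covering has degree one, i.e. when $\Lambda=\Gamma$. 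So the first step is to reduce the corollary to the single equality $\Lambda=\Gamma$, to be verified for every fibration of $M_1$ and of $M_2$.

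The second step is to convert $\Lambda=\Gamma$ into a condition on the isometries of $M$. As $\widetilde M$ is simply connected, $\Lambda$ is the orbifold fundamental group of the minimal element, which is a finite-volume hyperbolic orbifold finitely covered by $M$; hence $\Lambda$ is a lattice commensurable with $\Gamma$, and $\Gamma<\Lambda<C^{+}(\Gamma)$ on the orientation-preserving part. Here I would use that $M_1$ and $M_2$ have no hidden symmetries, so that $C^{+}(\Gamma)=N^{+}(\Gamma)$ (and the analogous equality for the full commensurator, which also holds for these examples); then every element of $\Lambda$ normalizes $\Gamma$ and $\Lambda/\Gamma$ embeds in the isometry group $\mathrm{Isom}(M)=N(\Gamma)/\Gamma$. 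Orientation-reversing isometries must be allowed here, since a quotient fibered pair may be non-orientable with Klein-bottle cusps. Conversely, any isometry of $M$ preserving $\mathcal F$ also preserves the singular Sol metric and so lies in $\Lambda/\Gamma$, and such an isometry $\varphi$ preserves $\mathcal F$ precisely when $\varphi^{*}\omega=\pm\omega$. Thus $\Lambda/\Gamma$ is exactly the group of isometries fixing $\omega$ up to sign, and $\Lambda=\Gamma$ is equivalent to the assertion that no nontrivial isometry of $M$ fixes, up to sign, the ray spanned by $\omega$.

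The last and main step is to check this for every fibered class simultaneously. The fibered classes are exactly the integral points in the open cones over the fibered faces of the Thurston norm ball, and $\mathrm{Isom}(M)$ acts linearly on $H^1(M;\mathbb R)$ permuting these faces. I would therefore invoke the explicit data available for these two manifolds—the isometry group, its representation on $H^1(M;\mathbb R)$, and the fibered face structure of the Thurston norm ball (both are known for the magic manifold $M_2$, and similarly for $M_1=S^3\setminus 6^2_2$)—and verify that every direction fixed up to sign by a nontrivial isometry lies on the boundary of the top-dimensional faces of the norm ball, hence is never interior to a fibered cone. This is the hard part and is genuinely case-by-case: after using the symmetry action to reduce to one representative per orbit of fibered faces, one must rule out the possibility that an isometry stabilizing a fibered face setwise has a fixed (or anti-fixed) direction lying in the open face. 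Once this is confirmed, $\Lambda=\Gamma$ holds for every fibration of $M_1$ and $M_2$, and Theorem \ref{Hyperbolic} then shows that each fibration is the minimal element of its commensurability class, proving the corollary.
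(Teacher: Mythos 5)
Your reduction breaks down at the second step, and the computation you defer to the third step is a verification of a false statement. First, $M_1=S^3\setminus 6^2_2$ and the Magic 3-manifold are both \emph{arithmetic}, so by Margulis \cite{Mar} their commensurators are non-discrete and in particular $C^{+}(\Gamma)\neq N^{+}(\Gamma)$: these two manifolds have many hidden symmetries (the paper says exactly this in the remark following the proof of Theorem \ref{thm.hidden}). Hence you cannot conclude that $\Lambda$ normalizes $\Gamma$, nor that $\Lambda/\Gamma$ embeds in $\mathrm{Isom}(M)$, and the whole translation of ``$\Lambda=\Gamma$'' into a statement about isometries of $M$ acting on $H^1(M;\mathbb{R})$ collapses. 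Second, even granting that translation, the case-check you propose cannot succeed: by Example \ref{Hironaka} the amphicheiral symmetry of $S^3\setminus 6^2_2$ acts on $H^1$ by $U\mapsto -U$, $T\mapsto T$, so it fixes the primitive class $T$, which lies in the \emph{interior} of the fibered face $\Delta=\{aU+bT\mid -1<a<1,\ b=1\}$. So there do exist fibered directions fixed by nontrivial isometries, and your claimed equivalence ``minimal $\Leftrightarrow$ no isometry fixes $\omega$ up to sign'' would then contradict the corollary itself; the equivalence leaks precisely where orientation-reversal, fixed points, and orbifold quotients enter.

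The paper's proof is entirely different and avoids both problems. Since $M_1$ and $M_2$ are the complements of the fibered links of $\sigma_1\sigma_2^{-1}$ and $\sigma_1\sigma_2^{-1}\sigma_1$ in $B_3$, every pseudo-Anosov monodromy arising here has all singularities at punctures, i.e.\ $\mathrm{Int}(F)\cap\mathrm{Sing}(\phi)=\emptyset$; the last assertion of Theorem \ref{Hyperbolic} then says the minimal element is a \emph{manifold}. So it suffices to show $M_1$ and $M_2$ do not properly cover any manifold, which is done by volume bounds: $\mathrm{Vol}(M_1)=4V_0$ together with Cao--Meyerhoff \cite{CM} leaves only m003 and m004 as possible quotients, which a SnapPy enumeration of double covers excludes; for $M_2$, a degree-$2$ quotient would have volume below $V_8$ and hence one cusp by Agol \cite{Ago1}, impossible for a quotient of a $3$-cusped manifold, while degree $\geq 3$ is excluded by Cao--Meyerhoff again. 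Your proposal contains no analogue of either ingredient (the punctured-singularity observation or the volume/census argument), and as written it cannot be repaired by more careful bookkeeping of the isometry action on the Thurston norm ball.
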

\begin{proof}
Since $S^3\setminus 6^2_2$ (resp. the Magic 3-manifold) is homeomorphic to 
the complement of the fibered link associated to $\sigma_1\sigma_2^{-1} \in B_3$ (resp. $\sigma_1\sigma_2^{-1}\sigma_1 \in B_3$),
where $B_3$ is the braid group on $3$ strands (see Figure \ref{fclosure}).
It is well known that for every pseudo-Anosov element of $B_3$, all singularities are punctured.
Therefore it suffices to prove that $M_1$ and $M_2$ are minimal {\em manifolds} (not orbifolds) with respect to usual covering relation.
Note that ${\rm Vol}(S^3\setminus 6^2_2) = 4V_0$  where $V_0 \approx 1.01...$ is the volume of ideal regular tetrahedron (see for example \cite{GMM}).
Then by a result of Cao-Meyerhoff \cite{CM}, it can only cover the figure eight knot complements or its sister
(m$004$ or m$003$ in SnapPea notation).
However SnapPy \cite{SnapPy}, can enumerate all double covers of m$003$ and m$004$ and none of them are homeomorphic to $S^3\setminus 6^2_2$.
Similarly, the Magic 3-manifold $M_2$ has volume $\approx 5.33...$ and if it covers a manifold by degree $2$, then its volume is $\approx 2.66... < V_8$ where
$V_8 \approx 3.66...$ is the volume of ideal regular octahedron.
Then by a result of Agol \cite{Ago1}, it has only one cusp and can not be doubly covered by $M_2$ a 3-cusped manifold.
Moreover, since ${\rm Vol}(M_2)/3 \approx 1.77... < 2V_0$, again by Cao-Meyerhoff, $M_2$ cannot cover any manifold by degree greater than 2.
Then the result follows from the last assertion of Theorem \ref{Hyperbolic}.
\end{proof}

\begin{figure}[h]
\includegraphics[bb = 0 0 532 436, scale=0.3]{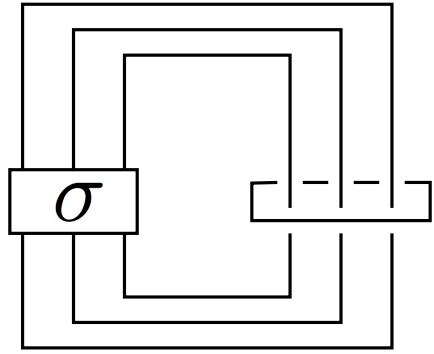}
\caption{The fibered link associated to a braid $\sigma\in B_3$}
\label{fclosure}
\end{figure}

\begin{rmk}
For a fixed surface, there exists a pseudo-Anosov automorphism with the smallest dilatation (c.f. \cite{Iva}).
It is interesting to compute the smallest dilatation for a given surface.
In \cite{H} and \cite{KT}, Hironaka and Kin-Takasawa computed dilatations of the monodromy of each fiber of $S^3\setminus 6^2_2$ and the Magic 3-manifold respectively.
It turns out that many small dilatation pseudo-Anosov automorphisms appear as the monodromies of fibrations of those manifolds.
Corollary \ref{HKT} shows that all such fibrations are minimal and hence their monodromies can be 
candidates for the smallest dilatation pseudo-Anosov maps.
\end{rmk}

\subsection{Transitivity of commensurability in Definition \ref{def.cms}}\label{ssec.trans}
In this subsection, we discuss the subtle difference between
fibered commensurability and commensurability in the sense of Definition \ref{def.cms}. 
Here, two pairs of type $(F,\phi)$ are said to be {\em fibered commensurable} if associated fibered pairs are commensurable.
It is easy to see that if two pairs $(F_1,\phi_1)$ and $(F_2,\phi_2)$ are fibered commensurable, 
they are commensurable in the sense of following definition.
\begin{defi}[\cite{Car}]\label{def.Car}
Two pairs $(F_1, \phi_1)$ and $(F_2, \phi_2)$ are {\em commensurable} if there is a surface $\widetilde{F}$,
an automorphism $\widetilde{\phi}$, and nonzero integers $k_1$ and $k_2$, so that $(\widetilde{F},\widetilde\phi)$ covers
$(F_i,\phi_i^{k_i})$ for $i = 1,2 $.
\end{defi}

In \cite{CSW}, it is claimed without proof that 
two pairs $(F_1,\phi_1)$ and $(F_2,\phi_2)$ are fibered commensurable if and  only if
they are commensurable in the sense of Definition \ref{def.cms}.
Since a map can not always be lifted even if its power can be lifted, the claim is not trivial.
The claim would follow from the transitivity of commensurability relation in the sense of Definition \ref{def.cms} 
because taking power of the automorphism is actually a covering.
In this subsection, we will prove that the transitivity of commensurability in Definition \ref{def.cms} is valid 
if the automorphisms are pseudo-Anosov.
\begin{prop}\label{prop.trans}
Suppose that $(F_i, \phi_i)$ and $(F_{i+1}, \phi_{i+1})$ are commensurable in the sense of Definition \ref{def.cms} for $i=1,2$.
Suppose further that $\phi_i$ are pseudo-Anosov for $i=1,2,3$.
Then there exists a pair $(F_{123},\widetilde\phi_i)$ that covers $(F_i,\phi_i)$ for each $i=1,2,3$ such that 
$\widetilde{\phi_1}^{k_1} = \widetilde{\phi_2}^{k_2} = \widetilde{\phi_3}^{k_3}$ for some 
$k_1,k_2,k_3\in \mathbb{Z}\setminus\{0\}$.
In particular, commensurability in the sense of Definition \ref{def.cms} is transitive.
\end{prop}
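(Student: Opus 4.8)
The plan is to run the entire argument inside the isometry group of a common singular Sol metric, reusing the machinery from the proof of Theorem \ref{Hyperbolic}. First I would replace the pairs by their mapping tori $M_i=[F_i,\phi_i]$ and use the hypotheses to place $\pi_1(M_1),\pi_1(M_2),\pi_1(M_3)$ inside a single group. A commensurability in the sense of Definition \ref{def.cms} between consecutive pairs gives automorphisms with $\widetilde\phi_i^{\,k}=\widetilde\phi_{i+1}^{\,k'}$ on a common cover surface, and the associated mapping torus is a common finite cover of $M_i$ and $M_{i+1}$; hence these mapping tori share the singular Sol structure determined by their stable and unstable foliations. Since the Sol structure of $M_2$ is canonical, the two sharings take place in the same isometry group $\Lambda$, so all three $G_i:=\pi_1(M_i)$ embed in $\Lambda$, which is discrete by the Claim in the proof of Theorem \ref{Hyperbolic}. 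Writing $\widehat M=\widehat S\times\mathbb{R}$ for the universal cover (with $\widehat S$ the singular Euclidean fibre), $\ell_i=\log\lambda(\phi_i)$, and $\tau\colon\Lambda^+\map\mathbb{R}$ for the flow-length homomorphism (translation length in the $\mathbb{R}$-direction, with $\ker\tau=\Lambda'$ the fibre-preserving isometries, acting on $\widehat S$), each $G_i$ decomposes as $G_i=\pi_1(F_i)\rtimes\langle\gamma_i\rangle$ with $\pi_1(F_i)=G_i\cap\ker\tau$ and $\tau(\gamma_i)=\ell_i$.

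Next I would translate the hypotheses into relations in $\Lambda$. After normalising the monodromy generators, the pairwise equalities $\widetilde\phi_i^{\,k}=\widetilde\phi_{i+1}^{\,k'}$, read on the universal cover, become honest identities $\gamma_1^{-a_1}\gamma_2^{a_2}=r_{12}$ and $\gamma_2^{-b_2}\gamma_3^{b_3}=r_{23}$ with $a_1\ell_1=a_2\ell_2$, $b_2\ell_2=b_3\ell_3$, and $r_{12}\in\pi_1(F_1)\cap\pi_1(F_2)$, $r_{23}\in\pi_1(F_2)\cap\pi_1(F_3)$. I would then fix a subgroup $\widehat P$ of finite index in $\pi_1(F_1)\cap\pi_1(F_2)\cap\pi_1(F_3)$, normal in each $\pi_1(F_i)$ and normalised by every $\gamma_i$; such a $\widehat P$ exists because $\Lambda$ is discrete, so the conjugation orbit of $\pi_1(F_1)\cap\pi_1(F_2)\cap\pi_1(F_3)$ under $\langle G_1,G_2,G_3\rangle$ is finite and one may intersect it. Set $F_{123}:=\widehat S/\widehat P$. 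Then $F_{123}$ finitely covers each $F_i$, and since $\gamma_i$ normalises $\widehat P$ the affine lift $\gamma_i^{-1}\psi_{\ell_i}$ of the pseudo-Anosov map descends to an automorphism $\widetilde\phi_i$ of $F_{123}$ covering $\phi_i$, where $\psi_s$ denotes the time-$s$ suspension flow on $\widehat M$.

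The heart of the matter is to make the three powers agree on this one surface. On $\widehat S$ one has $\widetilde\phi_i^{\,m}=\gamma_i^{-m}\psi_{m\ell_i}$, so, because $a_1\ell_1=a_2\ell_2$, the maps $\widetilde\phi_1^{\,a_1}$ and $\widetilde\phi_2^{\,a_2}$ coincide on $F_{123}$ exactly when $\gamma_1^{-a_1}\gamma_2^{a_2}=r_{12}$ lies in $\widehat P$. This is the subtlety this subsection is about: $r_{12}$ lies in $\pi_1(F_1)\cap\pi_1(F_2)$ but in general not in $\widehat P$, so the pairwise power-equality does not lift verbatim. The hard part, and the crux of the proof, is exactly this failure — a map need not lift even when one of its powers does. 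I would resolve it by passing to a higher power: modulo $\widehat P$ the element $\gamma_1^{-Na_1}\gamma_2^{Na_2}$ is congruent to $\prod_{j=0}^{N-1}\phi_1^{-ja_1}(r_{12})$, and since $\widehat P$ has finite index in $\pi_1(F_1)$ and $\phi_1$ acts on the finite group $\pi_1(F_1)/\widehat P$ with finite order, this product lies in $\widehat P$ for a suitable $N=N_1$. Hence $\widetilde\phi_1^{\,N_1a_1}=\widetilde\phi_2^{\,N_1a_2}$ on $F_{123}$, and symmetrically $\widetilde\phi_2^{\,N_2b_2}=\widetilde\phi_3^{\,N_2b_3}$ with the same lift $\widetilde\phi_2$.

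Finally I would combine the two relations. Raising them to suitable further powers so that the exponent of the common lift $\widetilde\phi_2$ becomes $T:=\mathrm{lcm}(N_1a_2,\,N_2b_2)$, and setting $k_1=Ta_1/a_2$, $k_2=T$, $k_3=Tb_3/b_2$ (all integers), gives $\widetilde\phi_1^{\,k_1}=\widetilde\phi_2^{\,k_2}=\widetilde\phi_3^{\,k_3}$ on $F_{123}$, which is the assertion; the transitivity of commensurability in the sense of Definition \ref{def.cms} follows by forgetting the third factor. I expect the main obstacle to be the coherent bookkeeping of the last two paragraphs: producing a single $\widehat P$ that is simultaneously of finite index in all three fibre groups and invariant under all three monodromies, and verifying that the de-twisted norm $\prod_j\phi_i^{\,j}(\cdot)$ really does land in $\widehat P$ after passing to a high enough power. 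This is precisely where the freedom to raise to powers and to descend to deeper covers — guaranteed by the discreteness of $\Lambda$ and the finiteness of subgroups of bounded index — is indispensable.
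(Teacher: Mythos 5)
Your overall strategy coincides with the paper's: both proofs run through the singular Sol structure on $[F_1,\phi_1]$ and the discreteness of its isometry group $\Lambda$ from Theorem \ref{Hyperbolic}, and both produce $F_{123}$ as the quotient of the singular Euclidean fiber by a finite-index subgroup to which all three monodromies descend. However, two of your supporting claims are genuinely unproved as stated, and both are repaired by the one ingredient you avoid, namely the minimal element. First, the existence of $\widehat P$: you need a finite-index subgroup of $\pi_1(F_1)\cap\pi_1(F_2)\cap\pi_1(F_3)$ that is normal in all three $\pi_1(F_i)$ and normalized by all three $\gamma_i$, and you justify this by saying that discreteness of $\Lambda$ makes the conjugation orbit of the triple intersection under $\langle G_1,G_2,G_3\rangle$ finite. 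Discreteness gives no such thing: the conjugation orbit of a subgroup under an infinite discrete group can be infinite, and even though each conjugate is commensurable with the original, the indices need not be bounded, so intersecting the orbit need not yield a finite-index subgroup. The correct repair is to note that all three $\pi_1(F_i)$ sit with finite index in the single finitely generated group $\Gamma_{\min}:=\Lambda^+\cap\ker\tau$ (the fiber group of the minimal element), which is normal in $\Lambda^+$; intersecting all subgroups of $\Gamma_{\min}$ of index at most $[\Gamma_{\min}:\pi_1(F_1)\cap\pi_1(F_2)\cap\pi_1(F_3)]$ gives a characteristic finite-index subgroup invariant under everything in $\Lambda^+$ --- which is exactly the paper's $H_{123}<\pi_1(F_{\min})$. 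Second, the claim that $r_{12}=\gamma_1^{-a_1}\gamma_2^{a_2}$ lies in $\pi_1(F_1)\cap\pi_1(F_2)$ does not follow: writing the monodromy element $\delta$ of the intermediate common cover as $\gamma_1^{a_1}s_1=\gamma_2^{a_2}s_2$ with $s_i\in\pi_1(F_i)$ only gives $\gamma_2^{-a_2}\gamma_1^{a_1}=s_2s_1^{-1}\in\ker\tau$, a product of elements from two different fiber groups. Your twisted-norm step needs $r_{12}$ to lie in a $\gamma_1$-invariant group in which $\widehat P$ has finite index; the natural such group is again $\Gamma_{\min}$, not the intersection of the fiber groups.

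Once these are rederived via $\Gamma_{\min}$, your argument does go through, and your last step is actually more careful than the paper's on one point: the paper lifts each $\phi_i$ to $F_{123}$ and asserts that the $k_i$-th powers of the lifts coincide, whereas a priori they are lifts of the same map $\phi_{\min}^{l}$ that could differ by a deck transformation of $F_{123}\map F_{\min}$. Your computation that $\gamma_1^{-Na_1}\gamma_2^{Na_2}$ reduces to a twisted norm in the finite quotient, which vanishes after passing to a suitable further power, is precisely what upgrades ``equal up to deck transformation'' to actual equality. So: same skeleton as the paper, one step made more explicit, but the existence of $\widehat P$ and the location of $r_{12}$ are genuine gaps whose natural fix is the paper's passage to the minimal element.
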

\begin{proof}
In Theorem \ref{Hyperbolic} we proved that each hyperbolic fibered commensurability class contains a unique minimal element.
Let $M = [F_1,\phi_1]$.
Recall that $\Lambda$ is the group of isometries of the singular Sol metric on the universal cover $\widetilde{M}$ (see the proof of Theorem \ref{Hyperbolic}).
Then by considering the subgroup $\Lambda^+$ that consists of isometries which preserve the orientation of $\widetilde{M}$ 
and the orientation of the leaf space of $\widetilde{M}$.
By taking $M_{\min}^+:=\widetilde{M}/\Lambda^+$, we get a unique minimal element among all commensurable fibered pairs both orientable and co-orientable.
Although there is a natural extension of this proof in the case where $\widetilde{M}/\Gamma^+$ is an orbifold,
such a proof would require more terminology and could obfuscate the key ideas of the proof.
Therefore, we only present the case where $\widetilde{M}/\Gamma^+$ is a manifold.
In this case we get an associated pair $(F_{\min},\phi_{\min})$ since $M_{\min}^+$ is orientable and co-orientable.
Each $(F_i,\phi_i)$ covers $(F_{\min},\phi_{\min}^{l_i})$ for some $l_i\in \mathbb{Z}\setminus\{0\}$ $(i=1,2,3)$.
Note that $\phi_{\min}$ is not always lifted to $F_i$.
Let $H_i<\pi_1(F_{\min})$ be a subgroup which is the image of $\pi_1(F_i)$ by the covering map for each $i=1,2,3$.
Further let $d = [\pi_1(F_{\min}):H_1\cap H_2\cap H_3]$.
Then we take $H_{123}:=\bigcap\{H<\pi_1(F_{\min})\mid [\pi_1(F_{\min}):H] = d\}$.
Recall that for a group $G$, a subgroup $H<G$ is called {\em characteristic} if for every isomorphism $f:G\map G$, 
we get $f(H) = H$.
$H_{123}$ is a characteristic subgroup and hence every homeomorphism on $F_{\min}$ lifts to the covering $F_{123}$
that corresponds to $H_{123}<\pi_1(F_{\min})$.
Since each $\phi_i:F_i\map F_i$ is a lift of $\phi_{\min}^{l_i}$, it can be lifted to $\widetilde\phi_i:F_{123}\map F_{123}$.
Let $l$ be the least common multiple of $l_i$'s, then by putting $k_i = l/l_i$, we get 
$\widetilde{\phi_1}^{k_1} = \widetilde{\phi_2}^{k_2} = \widetilde{\phi_3}^{k_3}$ on $F_{123}$.
\end{proof}
\begin{rmk}
We do not know if the transitivity, or the equivalence of fibered commensurability and commensurability in the sense of Definition \ref{def.cms} holds for the case where the automorphisms are periodic or reducible.
\end{rmk}

\section{Thurston norm and normalized entropy}\label{sec.entropy}
\subsection{Thurston norm}
Let $M$ be a fibered hyperbolic 3-manifold.
In this subsection we recall briefly the Thurston norm on $H^1(M;\mathbb{R})$ and 
discuss the relationship between fibered commensurability of fibrations on a fixed manifold $M$  and the normalized entropy.
For more details about the Thurston norm, see \cite{Thu.norm}, \cite{Kapo} and \cite{KT}.
For any (possibly disconnected) compact surface $F = F_1\sqcup F_2 \sqcup \cdots F_n$, 
let $\chi_-(F)$ be the sum of the absolute values of Euler characteristics $|\chi(F_i)|$ of components with negative Euler characteristics.
For a given $\omega \in H^1(M;\mathbb{Z})\subset H^1(M;\mathbb{R})$, we define $\|\omega\|$ to be

\begin{tabbing}
$\min\{\chi_-(F)\mid$\= $F$ is an embedded orientable surface $(F,\partial F)\subset (M,\partial M)$, and \\
			        \> $[F]\in H_2(M,\partial M;\mathbb{Z})$ is the Poincare dual of $\omega\in H^1(M;\mathbb{Z})\}.$
\end{tabbing}
If $F$ realizes the minimum, we call $F$ a minimal representative of $\omega$.
We can extend this norm to $H^1(M;\mathbb{Q})$ by $\|\omega\| = \|r\omega\|/r$.
It turns out that $\|\cdot\|$ extends continuously to $H^1(M;\mathbb{R})$.
Further, this $\|\cdot\|$ turns out to be semi-norm on $H^1(M;\mathbb{R})$ and the unit ball 
$U = \{\omega\in H^1(M;\mathbb{R})\mid \|\omega\| \leq 1\}$ is a compact convex polygon \cite{Thu.norm}.
The semi-norm $\|\cdot\|$ is called the Thurston norm on $H^1(M;\mathbb{R})$.
We need some more terminologies to explain the relationship between $\|\cdot\|$ and fibrations on $M$.
We denote
\begin{itemize}
\item the cone over a top dimensional face $\Delta$ of the unit ball $U$ by $C_{\Delta}$,
\item the set of integral classes on ${\rm Int}(C_{\Delta})$ by ${\rm Int}(C_{\Delta}(\mathbb{Z}))$, and
\item the set of rational classes on a top dimensional face $\Delta$ by $\Delta(\mathbb{Q})$.
\end{itemize}
Then Thurston proved
\begin{thm}[\cite{Thu.norm}]\label{thm.norm}
Let $M$ be a fibered hyperbolic 3-manifold and $F$ the fiber.
Then there is a top dimensional face $\Delta$ of $U$ such that
\begin{itemize}
\item the dual of $[F]\in H_2(M,\partial M;\mathbb{Z})$ belongs to ${\rm Int}(C_{\Delta}(\mathbb{Z}))$, and
\item for every primitive class $\omega$ in ${\rm Int}(C_{\Delta}(\mathbb{Z}))$, 
a minimal representative of $\omega$ is the fiber of a fibration on $M$.
\end{itemize}
\end{thm}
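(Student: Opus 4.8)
The plan is to start from the given fibration $\phi_0\colon M \map S^1$, whose pullback of the generator of $H^1(S^1;\mathbb{Z})$ is the dual $\omega_0$ of $[F]$, and to build around $\omega_0$ an open cone of fibered classes on which the norm is linear. First I would record that the fiber $F$ is norm-minimizing, i.e. $\|\omega_0\| = \chi_-(F)$. Since $M$ is hyperbolic, so that the monodromy is pseudo-Anosov, the fiber is incompressible and $\partial$-incompressible, and hence no complexity-reducing surgery on a surface dual to $\omega_0$ can produce something simpler than $F$; this identifies $F$ as a minimal representative and pins down the value of the norm at $\omega_0$.

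The heart of the argument is an openness statement. Represent $\omega_0$ by a nonsingular closed $1$-form $\alpha_0 = \phi_0^{*}(d\theta)$. For any closed $1$-form $\beta$ and any small $\epsilon$, the form $\alpha_0 + \epsilon\beta$ is again nowhere zero on the compact manifold $M$; hence the set of cohomology classes carried by nonsingular closed $1$-forms is open in $H^1(M;\mathbb{R})$. By Tischler's theorem a rational nonsingular closed $1$-form can be perturbed to a fibration, so every primitive integral class in this open set is fibered. This produces an open fibered cone containing $\omega_0$.

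Next I would show that $\|\cdot\|$ is linear on this open cone, which is precisely what forces the phrase ``top-dimensional face.'' The idea is to express $\chi(F_\omega)$, for the fiber of each nearby fibration, as the pairing $\langle e, \mathrm{PD}(\omega)\rangle$ of a single fixed class $e$, the Euler class of the $2$-plane field tangent to the leaves, against the Poincar\'e dual of $\omega$. Since $e$ does not depend on $\omega$ and $\mathrm{PD}$ is linear, $\|\omega\| = -\langle e, \mathrm{PD}(\omega)\rangle$ is the restriction of a fixed linear functional to the fibered cone. A seminorm that agrees with one linear functional on an open full-dimensional cone must realize that functional as a supporting functional of a facet, so this open cone is the cone over the interior of a top-dimensional face $\Delta$ of $U$. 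Combined with the openness step, this yields both bullet points: the dual of $[F]$ lies in $\mathrm{Int}(C_{\Delta}(\mathbb{Z}))$, and every primitive integral class there is fibered with minimal representative a fiber.

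The main obstacle I expect is the linearity/Euler-class step: one must verify that the fibers of all the nearby fibrations carry a common transverse structure, so that their Euler characteristics are computed by one and the same cohomology class, and one must control the possible disconnectedness and non-primitivity of classes on the boundary of the cone. The incompressibility input from the first step and the appeal to Tischler's theorem are comparatively standard; it is the global linearity of the norm on the fibered cone that carries the real content of Thurston's theorem, and it is where I would spend most of the effort.
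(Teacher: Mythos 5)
The paper does not prove this statement at all: it is quoted verbatim from Thurston's norm paper \cite{Thu.norm}, so there is no in-paper argument to compare against. Your outline is the standard proof of Thurston's fibered face theorem (openness of the nonsingular-form condition plus Tischler, then linearity of the norm on the resulting cone via the Euler class of the plane field tangent to the leaves), and the overall architecture is right, including the convexity observation that a seminorm agreeing with a linear functional on an open full-dimensional cone forces that cone into the cone over a single top-dimensional face.

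There is, however, a genuine gap at the step you treat as ``comparatively standard.'' Incompressibility and $\partial$-incompressibility of the fiber do \emph{not} imply that $F$ is norm-minimizing: a priori there could be an embedded surface dual to $\omega_0$, not isotopic to $F$, with strictly smaller $\chi_-$, and no ``complexity-reducing surgery'' argument rules this out. The actual content of Thurston's proof is the inequality $\chi_-(S)\ \geq\ |\langle e,[S]\rangle|$ for \emph{every} embedded surface $S$ (not just fibers), where $e$ is the Euler class of the plane field tangent to the fibration; this is proved by putting $S$ in general position with respect to the foliation (Roussarie--Thurston) so that all tangencies are saddles, and it is exactly what gives the lower bound $\|\omega\|\geq -\langle e,\mathrm{PD}(\omega)\rangle$. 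In your write-up the Euler class is only used to \emph{compute} $\chi(F_\omega)$ for the fibers you construct, which yields the upper bound $\|\omega\|\leq -\langle e,\mathrm{PD}(\omega)\rangle$; without the matching lower bound you have shown neither that $F$ is a minimal representative nor that the norm is linear on the cone, and the supporting-functional argument cannot get started. So the effort should be concentrated not on the linearity bookkeeping but on establishing the universal Euler-class inequality; once that is in place, both bullet points of the theorem follow exactly as you describe. (A secondary point to attend to: since $M$ has boundary here, the perturbed closed $1$-forms must be kept nonsingular on $\partial M$ and the surfaces are properly embedded pairs $(F,\partial F)\subset(M,\partial M)$, which is why the norm uses $\chi_-$ and relative homology.)
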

We call the face $\Delta$ in Theorem \ref{thm.norm} a {\em fibered face} and the cone over a fibered face a {\em fibered cone}.

As a corollary, we see that if the first Betti number $b_1(M)>1$ and $M$ is fibered, then $M$ has infinitely many distinct fibrations.
We will discuss fibered commensurability of fibrations of a hyperbolic fibered 3-manifold.

\subsection{Normalized entropy}
The normalized entropy is shared by commensurable fibrations on a fixed hyperbolic 3-manifold.
\begin{prop}\label{prop.normalized_ent}
Suppose that $[F_1,\phi_1] = [F_2,\phi_2]$ and their interior admit hyperbolic metrics.
If $(F_1,\phi_1)$ is commensurable to $(F_2,\phi_2)$, then $$\chi(F_1)\log(\lambda(\phi_1)) = \chi(F_2)\log(\lambda(\phi_2)).$$
\end{prop}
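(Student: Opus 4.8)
The plan is to show that the normalized entropy $\chi(F)\log\lambda(\phi)$ is invariant under finite covers of pairs, and then to exploit the definition of commensurability, which routes both pairs through a common cover. The key numerical fact I would establish first is the behavior of the two ingredients under a covering $\pi:\widetilde F\map F$ of degree $d$ with $\pi\widetilde f=f\pi$. On the one hand, the Euler characteristic is multiplicative under finite covers, so $\chi(\widetilde F)=d\,\chi(F)$. On the other hand, I would argue that the dilatation is preserved: $\lambda(\widetilde\phi)=\lambda(\phi)$. The latter is the heart of the matter and I return to it below.

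Granting these two facts, the proof is a short computation. By Definition \ref{def.cms}, commensurability of $(F_1,\phi_1)$ and $(F_2,\phi_2)$ provides a surface $\widetilde F$, lifts $\widetilde\phi_1,\widetilde\phi_2$ with $(\widetilde F,\widetilde\phi_i)$ covering $(F_i,\phi_i)$ of some degree $d_i$, and nonzero integers $k_1,k_2$ with $\widetilde\phi_1^{k_1}=\widetilde\phi_2^{k_2}$. From multiplicativity of $\chi$ and invariance of $\lambda$ I get
\begin{equation}
\chi(\widetilde F)\log\lambda(\widetilde\phi_i)=d_i\,\chi(F_i)\log\lambda(\phi_i),\qquad i=1,2.
\end{equation}
Next I use the relation $\widetilde\phi_1^{k_1}=\widetilde\phi_2^{k_2}$. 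Since the dilatation of a power is $\lambda(\psi^k)=\lambda(\psi)^{|k|}$, taking logarithms of the equality of these two automorphisms of $\widetilde F$ gives $|k_1|\log\lambda(\widetilde\phi_1)=|k_2|\log\lambda(\widetilde\phi_2)$. Combining this with the displayed identity lets me solve for the ratio $\log\lambda(\widetilde\phi_1)/\log\lambda(\widetilde\phi_2)$ and cancel $\chi(\widetilde F)$, after which the claim $\chi(F_1)\log\lambda(\phi_1)=\chi(F_2)\log\lambda(\phi_2)$ drops out—provided the covering degrees $d_i$ and the exponents $k_i$ conspire correctly. At this point I would check the bookkeeping: the hypothesis $[F_1,\phi_1]=[F_2,\phi_2]$ as the same hyperbolic manifold $M$ forces the mapping tori to coincide, which constrains how $d_i$ and $k_i$ relate through the induced maps on $H^1(M;\mathbb{Z})$, and I expect this is exactly what makes the two sides balance.

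The main obstacle is the invariance $\lambda(\widetilde\phi)=\lambda(\phi)$ under the covering. The clean way to see this is dynamical: a representative $\tilde f$ of $\widetilde\phi$ satisfies $\pi\tilde f=f\pi$, so the stable and unstable measured singular foliations $(\mathcal F^{s/u},\mu^{s/u})$ of $f$ pull back under $\pi$ to measured singular foliations on $\widetilde F$ that are invariant under $\tilde f$, with the transverse measures scaled by exactly the same factors $\lambda$ and $1/\lambda$. By uniqueness of the pseudo-Anosov structure on $\widetilde\phi$, these pulled-back foliations realize $\widetilde\phi$, so its dilatation equals $\lambda$. I would phrase this carefully because the pullback of a singular foliation can acquire higher-order singularities at preimages of singular points and at branch points of $\pi$, but this only affects the singularity data, not the transverse scaling factor, so the dilatation is genuinely unchanged. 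An equivalent and perhaps cleaner route, since $[F_i,\phi_i]=M$ is fixed, is to recall that the dilatation is recovered from the singular Sol metric on $\mathrm{Int}(M)$ introduced in the proof of Theorem \ref{Hyperbolic}: the flow in the $t$-direction scales the two transverse measures by $e^{t}$ and $e^{-t}$, and the return map to a fiber has dilatation determined by the period, which is an invariant of $M$ itself. Either formulation reduces the proposition to the elementary arithmetic of the previous paragraph.
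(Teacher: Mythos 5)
Your two preliminary facts --- multiplicativity of $\chi$ under finite covers and invariance of the dilatation --- are correct and are exactly the ingredients the paper uses, and your justification of $\lambda(\widetilde\phi)=\lambda(\phi)$ by pulling back the invariant measured foliations is sound (the paper simply asserts it). The gap is that the step you defer as ``bookkeeping'' and ``expect'' to balance is the only place the hypothesis $[F_1,\phi_1]=[F_2,\phi_2]$ enters, and it is genuine content rather than bookkeeping: without it the statement is false, e.g.\ $(F,\phi)$ and $(F,\phi^2)$ are commensurable but their normalized entropies differ by a factor of $2$. Concretely, your three relations $d_1\chi(F_1)=d_2\chi(F_2)$, $\lambda(\widetilde\phi_i)=\lambda(\phi_i)$, and $|k_1|\log\lambda(\widetilde\phi_1)=|k_2|\log\lambda(\widetilde\phi_2)$ leave you still needing $|k_1|/\chi(F_1)=|k_2|/\chi(F_2)$, and nothing you have written produces it; your suggestion that it falls out of ``the induced maps on $H^1(M;\mathbb{Z})$'' is not the mechanism that works.

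The paper closes this as follows. Since $\widetilde\phi_1^{k_1}=\widetilde\phi_2^{k_2}$, the mapping tori $[\widetilde F,\widetilde{\phi_1}^{k_1}]$ and $[\widetilde F,\widetilde{\phi_2}^{k_2}]$ are one and the same manifold $N$, and $N$ covers $[F_i,\phi_i]=M$ with degree $|k_i|\,d_i=|k_i|\chi(\widetilde F)/\chi(F_i)$ (the fiber is covered $d_i$ times and the circle direction $|k_i|$ times). Because the degree of any finite covering between the two fixed finite-volume hyperbolic manifolds $N$ and $M$ equals $\mathrm{Vol}(N)/\mathrm{Vol}(M)$, the two degrees coincide, which is exactly the missing relation $|k_1|/\chi(F_1)=|k_2|/\chi(F_2)$. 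Substituting this into $|k_1|\log\lambda(\phi_1)=|k_2|\log\lambda(\phi_2)$ gives the claim. With this one observation added, your argument coincides with the paper's proof.
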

\begin{proof}
There are pairs $(\widetilde{F},\widetilde{\phi_i})$ that cover $(F_i,\phi_i)$ and $k_i\in\mathbb{Z}\setminus\{0\}$
for $i = 1,2$ such that $\widetilde{\phi_1}^{k_1} = \widetilde{\phi_2}^{k_2}$.
Then the mapping torus $[\widetilde{F},\widetilde{\phi_i}^{k_i}]$ covers $[F_i,\phi_i]$ and the degree of
this cover is $k_i\chi(\widetilde{F})/\chi(F_i)$.
Since $[F_1,\phi_1] = [F_2,\phi_2]$, we get 
$k_1/\chi(F_1) = k_2/\chi(F_2)$.
Since $\lambda(\phi) = \lambda(\widetilde\phi)$,
$$\chi(\widetilde{F})\log(\lambda(\widetilde{\phi_i}^{k_i})) = \frac{\chi(\widetilde{F})}{\chi(F_1)}\chi(F_1)k_1\log(\lambda(\phi_1))
= \frac{\chi(\widetilde{F})}{\chi(F_2)}\chi(F_2)k_2\log(\lambda(\phi_2)).$$
Putting them all together, we get $\chi(F_1)\log(\lambda(\phi_1)) = \chi(F_2)\log(\lambda(\phi_2)).$
\end{proof}
Each primitive integral class in $C_\Delta(\mathbb{Z})$ corresponds to a rational class in ${\rm Int}(\Delta)$.
The normalized entropy defines a function ${\rm ent}:\Delta(\mathbb{Q})\map \mathbb{R}$.
In \cite{Fri}, the function $\frac{1}{\rm ent}$ is shown to be concave and therefore it extends to ${\rm Int}(\Delta)$.
Moreover, 
\begin{thm}[\cite{McM}]\label{thm.concave}
$\frac{1}{\rm ent}:{\rm Int}(\Delta)\map \mathbb{R}$ is {\em strictly} concave.
\end{thm}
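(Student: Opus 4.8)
The plan is to take Fried's concavity of $1/\mathrm{ent}$ on $\mathrm{Int}(\Delta)$ (recalled just above) as given and upgrade it to \emph{strict} concavity. Since a concave function is strictly concave exactly when it is affine along no segment, it suffices to rule out the existence of a segment $I\subset\mathrm{Int}(\Delta)$ on which $1/\mathrm{ent}$ restricts to an affine function, and I would argue by contradiction: suppose such an $I$ exists.

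The first and main technical input I would bring in is McMullen's Teichm\"uller polynomial $\theta_\Delta$, an element of the integral group ring of $H_1(M;\mathbb{Z})/\mathrm{torsion}$ attached to the fibered face. Its two decisive properties are: (i) for each primitive integral $\omega$ in the fibered cone, the dilatation $\lambda(\omega)$ is the largest root of the one-variable specialization $\theta_\Delta^{\omega}(t)$ obtained by the substitution $g\mapsto t^{\langle\omega,g\rangle}$; and (ii) this largest root is a \emph{simple} root, being the Perron--Frobenius eigenvalue of the non-negative transition matrix of an invariant train track lifted to the maximal free abelian cover. Property (ii) lets me apply the implicit function theorem to conclude that $\omega\mapsto\log\lambda(\omega)$, and hence $\mathrm{ent}$ and $1/\mathrm{ent}$, is \emph{real-analytic} on $\mathrm{Int}(\Delta)$. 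Constructing $\theta_\Delta$ and proving (i) and (ii) rigorously is where the bulk of the effort lies, and I expect it to be the principal obstacle.

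Granting real-analyticity, the argument closes cleanly via a chord-and-boundary computation (and in particular does not require the hypothetical affine locus to spread out over the whole face, which would fail in dimension $\ge 2$). Let $L$ be the line through $I$ and $J=L\cap\mathrm{Int}(\Delta)$ the resulting open chord, whose two endpoints lie on the relative boundary $\partial\Delta$. The restriction $1/\mathrm{ent}|_J$ is a real-analytic concave function of one variable that is affine on the subinterval $I$; hence its second derivative, being real-analytic in one variable, vanishes on $I$ and therefore on all of the interval $J$, so $1/\mathrm{ent}$ is affine along the entire chord $J$. Now I invoke the properness of the normalized entropy on the open fibered face: $\mathrm{ent}(\omega)\to\infty$, equivalently $1/\mathrm{ent}(\omega)\to 0$, as $\omega$ tends to either endpoint of $J$ on $\partial\Delta$. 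An affine function of one variable vanishing at both ends of an interval is identically zero, contradicting $1/\mathrm{ent}>0$ on $J$. Hence $1/\mathrm{ent}$ is affine on no segment, i.e. it is strictly concave. (When $b_1(M)=2$ the face is one-dimensional and this is just the statement that a linear function vanishing at both endpoints of a segment is zero.)

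A more self-contained variant of the final contradiction would avoid citing the boundary blow-up and instead analyze the Perron--Frobenius description from (ii) directly: by Kingman's theorem the logarithm of the spectral radius of a non-negative matrix with log-convex (here monomial) entries is convex, and the equality case is governed by the collinearity of the cycle weights of the matrix. Affineness of $1/\mathrm{ent}$ along a segment would force this degenerate collinear configuration, making $\lambda(\omega)$ depend on $\omega$ only through a single linear functional in the segment direction, which one would then contradict using irreducibility of the train-track map. Either finish suffices; in both the genuinely hard ingredient is the passage through the Teichm\"uller polynomial, so that is where I would concentrate the work.
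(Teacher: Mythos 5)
The paper offers no proof of this statement to compare against: Theorem \ref{thm.concave} is quoted from McMullen \cite{McM}, with Fried \cite{Fri} supplying the plain concavity in the preceding sentence, and no argument is given. What you have written is, in outline, McMullen's own proof, and as a reconstruction it is correct. The three ingredients you isolate are exactly the ones needed: (a) concavity of $1/\mathrm{ent}$ on $\mathrm{Int}(\Delta)$ (Fried); (b) real-analyticity of $\omega\mapsto\log\lambda(\omega)$ on the fibered cone, which McMullen obtains from the Teichm\"uller polynomial via simplicity of the leading Perron--Frobenius eigenvalue of the specialized train-track matrix plus the implicit function theorem; and (c) Fried's theorem that $1/\mathrm{ent}$ extends continuously to the closed face and vanishes on its relative boundary. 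Your chord argument is sound: a concave function fails to be strictly concave precisely when it is affine on some segment; real-analyticity restricted to the line through that segment propagates affineness to the entire chord $J$; and an affine function on $J$ tending to $0$ at both endpoints (which lie on $\partial\Delta$, where (c) applies) vanishes identically, contradicting $1/\mathrm{ent}>0$. The alternative finish via log-convexity of the spectral radius is also viable in principle but its equality analysis is genuinely more delicate, so the first route is preferable. The one caveat is the one you flag yourself: (b) and (c) are imported rather than proved, and (b) is essentially the entire content of McMullen's construction, so what you have is a correct proof scheme resting on the same external machinery that the paper itself simply cites.
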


In Example 3.12 of \cite{CSW}, it is remarked that some fibrations on $S^3\setminus 6^2_2$ are not commensurable.
In Corollary \ref{HKT}, it is proved that all fibrations on $S^3\setminus 6^2_2$ are minimal elements 
and since each minimal element is unique, we see that two fibrations of $S^3\setminus 6^2_2$ are either symmetric or non-commensurable.
Here, we give an alternative proof of this fact in terms of the normalized entropy.
In \cite{H} or \cite{McM},  the unit ball of the Thurston norm on $H^1(S^3\setminus 6^2_2)$ is computed to be a square.
Further, the symmetries of the square all come from the symmetries of the manifold 
(see Example \ref{Hironaka} for more details about the symmetries of $S^3\setminus 6^2_2$).
Therefore the function $\frac{1}{\rm ent}$ is invariant under the action of the symmetries of the unit ball. 
Since $\frac{1}{\rm ent}$ is {\em strictly} concave, this proves that any two fibrations that correspond to
distinct elements in $H^1(M;\mathbb{Z})$
are either symmetric or non-commensurable.
In other words, the normalized entropy determines the commensurability class of a fibration on $S^3\setminus 6^2_2$ up to
symmetry.

On the other hand, in \cite[\S 2]{KKT}, it is observed that for the Magic 3-manifold $N$ there are rational points on a
fibered face which share the same normalized entropy but which are not symmetric to each other.
However again by Corollary \ref{HKT}, we also see that any two distinct fibrations of $N$ are either symmetric or non-commensurable.
Hence for the Magic 3-manifold, the commensurability classes of fibrations are not determined by the normalized entropies.
We do not know for what kind of hyperbolic 3-manifolds,
the commensurability classes of fibrations on the same hyperbolic 3-manifold
are determined by the normalized entropy up to symmetry.

\section{Commensurability of fibrations on a hyperbolic 3-manifold}\label{cms.mfd}
In this section we prove Theorem \ref{thm.hidden} and Theorem \ref{thm.genus}.
\subsection{Manifolds without hidden symmetries}
In this subsection we prove Theorem \ref{thm.hidden}.
First we prepare some definitions.
A {\em Kleinian group} is a discrete subgroup of $\PSL$.
Two Kleinian groups $\Gamma_1$ and $\Gamma_2$ are said to be {\em commensurable}
if $\Gamma_1\cap\Gamma_2$ is a finite index subgroup of both $\Gamma_1$ and $\Gamma_2$.
Let $\Gamma$ be a Kleinian group. Then the {\em commensurator} $C^+ (\Gamma)$ of $\Gamma$ is 
$$C^+ (\Gamma) = \{h \in \mathrm{PSL}(2,\mathbb{C})~\vert ~ \Gamma ~{\rm and}~  h\Gamma h^{-1}~ {\rm are~commensurable} \},$$
and the {\em normalizer} $N^+ (\Gamma)$ is $$N^+ (\Gamma) = \{h \in \mathrm{PSL}(2,\mathbb{C})~\vert ~ \Gamma = h\Gamma h^{-1}\},$$
Note that $N^+(\Gamma) < C^+(\Gamma)$.

Let $M$ be a hyperbolic 3-manifold and $\rho :\pi_1(M) \map \Gamma < \mathrm{PSL}(2,\mathbb{C})$ a holonomy representation of $\pi_1(M)$.
By the Mostow-Prasad rigidity theorem, any self-homeomorphism $\varphi:M\map M$ corresponds to a conjugation of $\Gamma$.
Therefore we get $N(\Gamma)/\Gamma \cong {\rm Isom}(M)$ where ${\rm Isom}(M)$ is the group of self-homeomorphisms of $M$.
If $C^+(\Gamma)\setminus N^+(\Gamma) \not= \emptyset$, each non-trivial element $h\in C^+(\Gamma)\setminus N^+(\Gamma)$ is said to be a {\em hidden symmetry}.
Then $M$ is said to have no hidden symmetries if $\Gamma$ has no hidden symmetries.
Note that by the Mostow-Prasad rigidity theorem, the holonomy representations of $\pi_1(M)$ are related by a conjugation.
Hence the definition does not depend on the choice of a holonomy representation.

\begin{proof}[Proof of Theorem \ref{thm.hidden}]
Let $(M,\mathcal{F}_1)$ and $(M,\mathcal{F}_2)$ be commensurable fibered pairs
that correspond to two distinct fibrations on $M$.
Then by Theorem \ref{Hyperbolic} we have a unique minimal element $(N,\mathcal{G})$ in the commensurability class.
Let  $\rho:\pi_1(N) \map \mathrm{PSL}(2,\mathbb{C})$ be a holonomy representation and $\Gamma := \rho(\pi_1(N))$.
Since $(M,\mathcal{F}_1)$ and $(M,\mathcal{F}_2)$ cover $(N,\mathcal{G})$, there are two corresponding coverings $p_1,p_2:M\map N$.
Let $\Gamma_i = \rho p_{i\ast}(\pi_1(M))$ for $i=1,2$.
Then by the Mostow-Prasad rigidity theorem, there is $h\in \mathrm{PSL}(2,\mathbb{C})$ such that $h\Gamma_1 h^{-1} = \Gamma_2$.
Further, since $\Gamma_2 <\Gamma \cap h\Gamma h^{-1}$, $h\in C^+(\Gamma) = C^+(\Gamma_1) = N^+(\Gamma_1)$.
The last equality holds since $M$ has no hidden symmetries.
Then it follows that $\Gamma_1 = \Gamma_2$ and 
hence there exists a homeomorphism $\varphi:M\map M$ such that $p_1\varphi = p_2$.
Therefore $\omega_1$ and $\omega_2$ are symmetric.
\end{proof}
\begin{rmk}
Hyperbolic 3-manifolds with hidden symmetries are ``rare" among all non-arithmetic 
hyperbolic 3-manifolds (see for example, \cite{GHH}).
Hence we may expect that ``most" hyperbolic 3-manifolds have no hidden symmetries and therefore
have no non-symmetric but commensurable fibration.
\end{rmk}
\begin{rmk}
As mentioned above,
there are no non-symmetric but commensurable fibrations on $S^3\setminus6_2^2$ and the Magic 3-manifold.
However, $S^3\setminus6_2^2$ and the Magic 3-manifold are arithmetic and by a result of Margulis \cite{Mar},
they have lots of hidden symmetries. 
Therefore even though a manifold has hidden symmetries, it might not have any non-symmetric but commensurable fibrations.
\end{rmk}

\subsection{Non-symmetric and commensurable fibrations}
In this subsection, we prove Theorem \ref{thm.genus} by constructing examples of
manifolds that have non-symmetric but commensurable fibrations.
\begin{lem}\label{lem.sym}
Let $M$ be a fibered hyperbolic 3-manifold.
Suppose two primitive elements $\omega_1\not= \pm\omega_2 \in H^1(M;\mathbb{Z})$ 
correspond to fibrations with the fibers and the monodromies $(F_1, \phi_1)$ and $(F_2, \phi_2)$ respectively.
We suppose further $(F_1, \phi_1) = (F_2, \phi_2)$ (i.e. conjugate to each other).
Then, for all large enough $n\in \mathbb{N}$, 
there exists a degree $n$ covering space $p_n:M_n\map M$ such that $p_n^\ast(\omega_1)$ and 
$p_n^\ast(\omega_2)$ correspond to commensurable but non-symmetric fibrations.
\end{lem}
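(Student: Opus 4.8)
The plan is to take $M_n$ to be the connected $n$-fold cyclic cover of $M$ dual to $\omega_1$, namely the cover associated to the reduction $\pi_1(M)\xrightarrow{\omega_1}\mathbb{Z}\map\mathbb{Z}/n$, and to treat the two required properties separately: commensurability will hold for \emph{every} such $n$, while non-symmetry will be forced, for all large $n$, by a difference between the Euler characteristics of the two pulled-back fibers. Since a finite cover of a finite-volume hyperbolic $3$-manifold is again finite-volume hyperbolic, each $M_n$ is of the type we want, and $p_n\colon M_n\map M$ has degree $n$.

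For commensurability I would argue as follows. Because $(F_1,\phi_1)=(F_2,\phi_2)$, the two fibrations of $M$ are symmetric: Mostow--Prasad rigidity gives a self-homeomorphism $\varphi$ of $M$ carrying the foliation $\mathcal{F}_1$ to $\mathcal{F}_2$, so $(M,\mathcal{F}_1)$ and $(M,\mathcal{F}_2)$ are covering equivalent. For any finite cover $p_n$, the fibered pair $(M_n,p_n^{-1}\mathcal{F}_i)$ covers $(M,\mathcal{F}_i)$ for $i=1,2$, and $(M_n,p_n^{-1}\mathcal{F}_i)$ is precisely the fibered pair of $p_n^{*}\omega_i$. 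Chaining these coverings through $\varphi$ and invoking the transitivity of fibered commensurability established in \S\ref{sec.fcms}, I conclude that $p_n^{*}\omega_1$ and $p_n^{*}\omega_2$ are commensurable. Thus commensurability is automatic and is not the real content; one may also check consistency with Proposition \ref{prop.normalized_ent}, since the two pulled-back fibrations will share the normalized entropy $n\,\chi(F_1)\log\lambda(\phi_1)$.

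For non-symmetry I would compute the leaves of the pulled-back foliations. In the cyclic cover dual to $\omega_1$ one has $M_n=[F_1,\phi_1^{n}]$, so each leaf of $p_n^{-1}\mathcal{F}_1$ is a copy of $F_1$, of Euler characteristic $\chi(F_1)$. The leaf $F_2$ of $\mathcal{F}_2$, by contrast, is transverse to the unwrapped circle. Writing $m$ for the positive generator of $\omega_1(\ker\omega_2)\subset\mathbb{Z}$ (which is nonzero because $\omega_1\neq\pm\omega_2$ are primitive, hence $\mathbb{Q}$-linearly independent), a component count through $\mathbb{Z}/n$ shows that the preimage of this leaf has $\gcd(m,n)$ components, each a connected covering of $F_2$ of degree $n/\gcd(m,n)$; thus each leaf of $p_n^{-1}\mathcal{F}_2$ has Euler characteristic $\tfrac{n}{\gcd(m,n)}\,\chi(F_2)$. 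Since $\chi(F_1)=\chi(F_2)\neq0$ and $\gcd(m,n)\leq m$, for every $n>m$ the two families of leaves have different Euler characteristics. A self-homeomorphism of $M_n$ carrying $p_n^{-1}\mathcal{F}_1$ to $p_n^{-1}\mathcal{F}_2$ would restrict to a homeomorphism between leaves and so equate these Euler characteristics; no such homeomorphism exists, so the two fibrations are non-symmetric for all $n>m$.

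The main obstacle is the bookkeeping in the third step: one must pin down the leaves of each pulled-back foliation by controlling the number of components of the preimage of a fiber and the degree of each component, so as to be certain that the two Euler characteristics genuinely differ for \emph{all} large $n$ rather than only along a subsequence. Once this component count and the covering degrees are verified, the asymmetry between the two fibration directions in the cyclic cover---one fiber held rigid, the other spread out by the unwrapping---delivers distinct Euler characteristics, hence non-symmetry, while commensurability has already been secured. I would also note, as a sanity check, that by Theorem \ref{thm.hidden} such an $M_n$ must possess hidden symmetries, which is consistent with $\varphi$ descending not to a symmetry of $M_n$ but to a hidden symmetry of $\pi_1(M_n)$.
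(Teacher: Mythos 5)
Your proposal is correct and takes essentially the same approach as the paper: both pass to the degree-$n$ cyclic cover dual to $\omega_1$ and distinguish the two pulled-back fibrations by showing that the leaves of $p_n^{-1}\mathcal{F}_1$ are copies of $F_1$ while each component of $p_n^{-1}(F_2)$ is a nontrivial cover of $F_2$ (because $\omega_1(\ker\omega_2)=m\mathbb{Z}\neq 0$ survives in $\mathbb{Z}/n$ for $n>m$), hence has different Euler characteristic. Your explicit $\gcd(m,n)$ component count and the separate transitivity argument for commensurability merely spell out details the paper leaves implicit.
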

\begin{proof}
Note that by the universal coefficient theorem, we have 
$$H^1(M;\mathbb{Z}) \cong {\rm Hom}(H_1(M)/{\rm Tor}, \mathbb{Z}),$$
where ${\rm Tor}$ is the torsion part.
This isomorphism is determined by a choice of a basis of $H_1(M;\mathbb{Z})/{\rm Tor}$.
Let $A_i = {\rm ab}(\pi_1(F_i))/{\rm Tor}$ where ${\rm ab}:\pi_1(M) \map H_1(M)$ is the abelianization and $\pi_1(F_i)\hookrightarrow \pi_1(M)$ is an injection induced by the fiber bundle structure of $M$ associated to $(F_i,\phi_i)$ for $i = 1,2$.
Then the fiber bundle structure of $M$ gives the exact sequence
$$0\map \pi_1(F_i)\map \pi_1(M)\xrightarrow{\rho_i} \pi_1(S^1)\cong \mathbb{Z}\map 0.$$
The map $\rho_i$ factors through the abelianization since $\pi_1(S^1)\cong \mathbb{Z}$ is abelian.
Hence we get $A_i = {\rm Ker}(\omega_i)\cong \mathbb{Z}^{b-1}$ where $b$ is the first Betti number of $M$.
Then we consider the dynamical covering $p_n:M_n\map M$ of degree $n$ with respect to $\omega_1$
(i.e. the covering corresponding to $(F_1,\phi_1^n)$).
This is the covering corresponding to the surjective map $\pi_1(M)\xrightarrow{\rm ab} H_1(M)\xrightarrow{\omega_1}\mathbb{Z}\map \mathbb{Z}/n\mathbb{Z}$.
Then for sufficiently large $n$, there exists $a\in A_2$ such that $a$ maps to a nonzero element by the above surjective map.
This means that each component of $p_n^{-1}(F_2)$ is not homeomorphic to $F_2$.
\end{proof}
\begin{ex}
The 3-manifold $S^3\setminus 6^2_2$ and the Magic manifold have symmetries that permute cusps,
and therefore they do have two distinct elements in their first cohomology 
with homeomorphic fibers and conjugate monodromies.
\end{ex}

\begin{ex}\label{Hironaka}
In this example we observe that $M := S^3\setminus 6^2_2$ has two symmetric fibrations in the same fibered cone in $H^1(M)$.
Although this fact can be checked by computing the symmetry group by SnapPy \cite{SnapPy},
we give a geometric proof.
The first half of the following argument is due to Eriko Hironaka, see also \cite{H}.

Let $u,t$ be the generators of $H_1(M,\mathbb{Z})$ 
that correspond to the meridians of $6^2_2$ (see the left picture of Figure \ref{fig.inv}).
Then let $U,T\in H^1(M;\mathbb{Z})$ be the dual of $u,t$ respectively.
Then $U$ corresponds to the fibration of $M$ with monodromy $f$ that corresponds to $\sigma_1\sigma_2^{-1}\in B_3$. 
Let $h$ be a $\pi$-rotation which is depicted in Figure \ref{fig.inv}.
We can see that $f^{-1} = \sigma_2\sigma_1^{-1} = hfh$, that is $f$ and $f^{-1}$ are conjugate to each other.
Then we take the mirror image of $6^2_2$.
By isotopy and above conjugacy, we see that $6^2_2$ is amphicheiral. 
The induced map on $H^1(M;\mathbb{Z})$ of the symmetry on $M$ that gives amphicheirality satisfy
 $U\mapsto -U$ and $T\mapsto T$.
This symmetry preserves the fibered face $\Delta := \{aU+bT \mid -1<a<1, b=1\}$.
By this symmetry, we see that fibrations on the cone $C_\Delta$ over $\Delta$ of the form $nU+mT$ and $-nU+mT$ 
($n,m\in\mathbb{Z}$) are symmetric.
\begin{figure}[h]
\includegraphics[bb = 0 0 1472 472, scale=0.2]{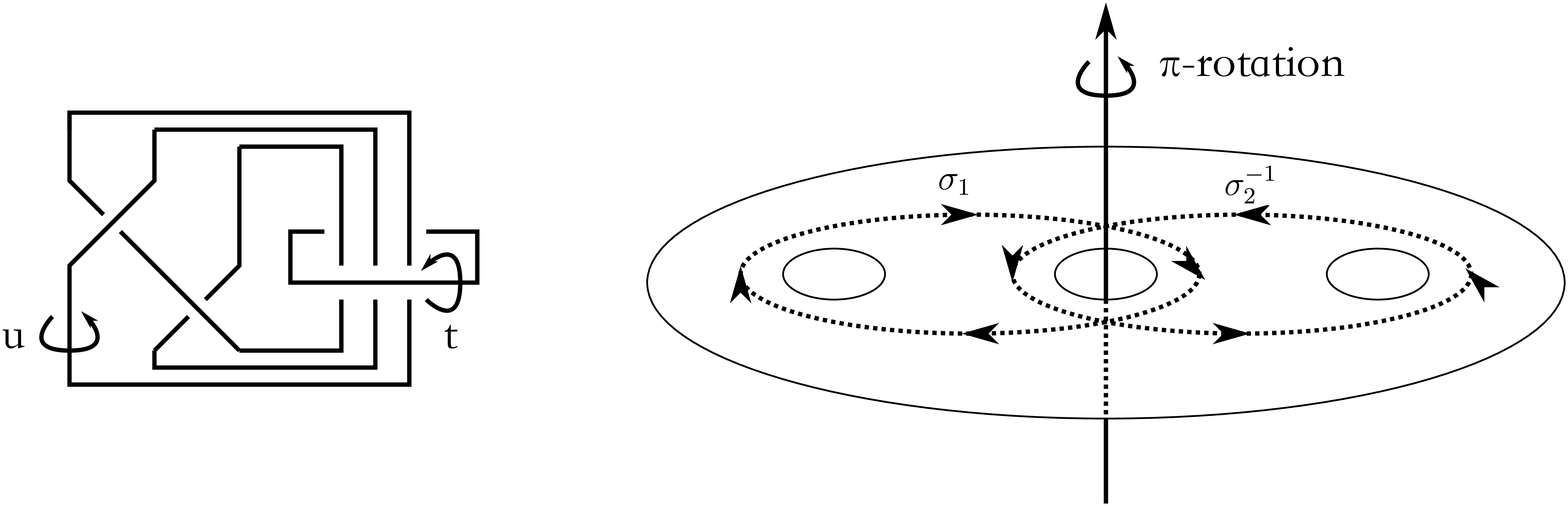}
\caption{A involution map $h$ on 4-holed sphere.}
\label{fig.inv}
\end{figure}
\end{ex}
\begin{proof}[Proof of Theorem \ref{thm.genus}]
Putting Lemma \ref{lem.sym} and Example \ref{Hironaka} together, we have a proof.
\end{proof}


\end{document}